\DeclareMathAlphabet{\mathpzc}{OT1}{pzc}{m}{it} 
\newtheorem{teo}{Theorem}
\newtheorem{propo}{Proposition}
\author[J. Betancor]{J.J. Betancor}
\address{Departamento de An\'{a}lisis Matem\'{a}tico\\
Universidad de la Laguna\\
Campus de Anchieta, Avda. Astrof\'{\i}sico Francisco S\'{a}nchez, s/n\\
38271 La Laguna (Sta. Cruz de Tenerife), Espa\~na}
\email{jbetanco@ull.es, jcfarina@ull.es, lrguez@ull.es}
\author[R. Crescimbeni]{R. Crescimbeni}
\address{Departamento de Matem\'aticas, Facultad de Econom\'{\i}a y Administraci\'on, Universidad Nacional de Co\-ma\-hue, 8300 Neuqu\'en, Argentina}
\email{rcrescim@uncoma.edu.ar}
\author[J.C. Fari\~{n}a]{J.C. Fari\~{n}a}
\author[L. Rodr\'{\i}guez-Mesa]{L. Rodr\'{\i}guez-Mesa}
\thanks{The first, third and fourth author are partially supported by MTM2010-17974. The second author is partially supported by the Universidad Nacional de Comahue.}
\date{\today}
\begin{document}

\title[]
{Multipliers and imaginary powers of the Schr\"odinger operators characterizing UMD Banach spaces}

\subjclass[2000]{42C05 (primary), 42C15 (secondary)} \keywords{}
\begin{abstract}
In this paper we establish $L^p$-boundedness properties for Laplace type transform spectral multipliers associated  with the Schr\"odinger operator $\mathcal{L}=-\Delta +V$. We obtain for this type of multipliers pointwise representation as principal value integral operators. We also characterize the UMD Banach spaces in terms of the $L^p$-boundedness of the imaginary powers $\mathcal{L}^{i\gamma }$, $\gamma \in \mathbb{R}$, of $\mathcal{L}$.

\end{abstract}

\maketitle

\section{Introduction}
We study certain class of spectral multipliers, usually called Laplace transform type multipliers associated with the Schr\"odinger operator $\mathcal{L}=-\Delta +V$, where $\Delta $ represents the Laplacian operator and the potential $V\geq 0$ satisfies reverse H\"older inequalities. We prove that $L^p$-boundedness of some of those multipliers, the imaginary powers of $\mathcal{L}$, acting on Banach valued functions characterizes the UMD property for the Banach space.

We now recall some definitions and properties that will be useful in order to state and to prove our results.

We consider the Schr\"odinger operator $\mathcal{L}=-\Delta +V$ on $\mathbb{R}^n$, with $n\geq 3$. We assume that $V \geq 0$ is a locally integrable function on $\mathbb{R}^n$ belonging to the class $B_q$, that is, there exists $C>0$ such that, for every ball $B$ in $\mathbb{R}^n$,
$$
\left(\frac{1}{|B|}\int_BV^q(x)dx\right)^{1/q}\leq C\frac{1}{|B|}\int_BV(x)dx,
$$
for some $q\geq n/2$.

The operator $\mathcal{L}$, suitably understood, is a closed unbounded and positive operator in $L^2(\mathbb{R}^n)$. Then, there exists the spectral measure $E_\mathcal{L}$ associated with $\mathcal{L}$ and, for every measurable bounded function $m$ on $[0,\infty )$, we define the spectral multiplier $T_m^\mathcal{L}$ by
$$
T_m^\mathcal{L}(f)=\int_{[0,\infty )}m(\lambda )E_\mathcal{L}(d\lambda )f,\quad f\in L^2(\mathbb{R}^n).
$$
It is well known that $T_m^\mathcal{L}$ defines a bounded operator from $L^2(\mathbb{R}^n)$ into itself.

We say that a measurable function $m$ on $(0,\infty )$ is of Laplace transform type when $m(\lambda )=\lambda \int_0^\infty e^{-\lambda t}\phi (t)dt$, $\lambda \in (0,\infty )$, for a certain $\phi \in L^\infty (0,\infty )$. The spectral multiplier $T_m^\mathcal{L}$ is called of Laplace transform type when the function $m$ is of Laplace transform type.

For every $t>0$, we define the operator $W_t^\mathcal{L}$ by
$$
W_t^\mathcal{L}(f)=\int_{[0,\infty )}e^{-\lambda t}E_\mathcal{L}(d\lambda )f,\quad f\in L^2(\mathbb{R}^n).
$$

The uniparametric family $\{W_t^\mathcal{L}\}_{t>0}$ is the semigroup of operators generated by $-\mathcal{L}$ in $L^2(\mathbb{R}^n)$. For every $t>0$ and $1\leq p<\infty$ the operator $W_t^\mathcal{L}(f)$ can be extended from $L^p(\mathbb{R}^n)\cap L^2(\mathbb{R}^n)$ to $L^p(\mathbb{R}^n)$ as a contraction from $L^p(\mathbb{R}^n)$ into itself.  The semigroup of operators
$\{W_t^\mathcal{L}\}_{t>0}$ is not conservative. Moreover, we can write, for every $t>0$ and $f\in L^p(\mathbb{R}^n)$, $1\leq p<\infty$,
$$
W_t^\mathcal{L}(f)(x)=\int_{\mathbb{R}^n}W_t^\mathcal{L}(x,y)f(y)dy,\quad x\in \mathbb{R}^n,
$$
where $W_t^\mathcal{L}(x,y)$, $x,y\in \mathbb{R}^n$ and $t\in (0,\infty )$, is a $C^\infty (\mathbb{R}^n\times \mathbb{R}^n\times (0,\infty ))$ such that, according to Feynman-Kac property,
$$
|W_t^\mathcal{L}(x,y)|\leq CW_t(x,y),\quad x,y\in \mathbb{R}^n\mbox{ and }t>0,
$$
being
$$
W_t(x,y)=\frac{1}{(4\pi t)^{n/2}}e^{-\frac{|x-y|^2}{4t}},\quad x,y\in \mathbb{R}^n\mbox{ and }t>0.
$$
We establish a pointwise representation of the Laplace transform type operator $T_m^\mathcal{L}$ as a principal value integral operator and we prove $L^p$-boundedness properties of $T_m^\mathcal{L}$. As usual by $C_c^ \infty (\mathbb{R}^n)$ we denote the space of smooth functions with compact support in $\mathbb{R}^n$.

\begin{teo}\label{Valorprincipal}
Suppose that $m(\lambda )=\lambda \int_0^\infty e^{-\lambda t}\phi (t)dt$, $\lambda \in (0,\infty )$, where $\phi \in L^\infty (0,\infty )$. Then, for every $f\in C_c^\infty (\mathbb{R}^n)$,
\begin{equation}\label{VP1}
T_m^\mathcal{L}(f)(x)=\lim_{\varepsilon \rightarrow 0^+}\left(\alpha (\varepsilon )f(x)+\int_{|x-y|>\varepsilon }K_\phi ^\mathcal{L}(x,y)f(y)dy\right),\quad \mbox{a.e. }x\in\mathbb{R}^n,
\end{equation}
where
$$
K_\phi ^\mathcal{L}(x,y)=-\int_0^\infty \phi (t)\frac{\partial}{\partial t}W_t^\mathcal{L}(x,y)dt,\quad x,y\in \mathbb{R}^n, x\not=y,
$$
and $\alpha$ is a certain measurable bounded function on $(0,\infty )$. Moreover, if there exists the limit $\lim_{t\rightarrow 0^+}\phi (t)=\phi (0^+)$, then
\begin{equation}\label{VP2}
T_m^\mathcal{L}(f)(x)=\phi (0^+)f(x) +\lim_{\varepsilon \rightarrow 0^+}\int_{|x-y|>\varepsilon }K_\phi ^\mathcal{L}(x,y)f(y)dy,\quad \mbox{a.e. } x\in \mathbb{R}^n.
\end{equation}
\end{teo}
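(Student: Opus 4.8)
The plan is to pass from the spectral multiplier to an integral of the heat semigroup $\{W_t^\mathcal{L}\}_{t>0}$, to separate off the contribution of a small neighbourhood of the diagonal, and to compare that contribution with the corresponding Euclidean one. First I would record a semigroup representation of $T_m^\mathcal{L}$: for $\delta>0$ put $m_\delta(\lambda)=\lambda\int_\delta^\infty e^{-\lambda t}\phi(t)\,dt$; since $\|m_\delta\|_\infty\le\|\phi\|_\infty$, $m_\delta\to m$ pointwise, and $\int_\delta^\infty\lambda e^{-\lambda t}\,dt=e^{-\lambda\delta}$, Fubini's theorem in the spectral integral $\int_{[0,\infty)}\lambda e^{-\lambda t}E_\mathcal{L}(d\lambda)f=-\partial_tW_t^\mathcal{L}(f)$ gives $T_{m_\delta}^\mathcal{L}(f)=\int_\delta^\infty\phi(t)\bigl(-\partial_tW_t^\mathcal{L}(f)\bigr)\,dt$ in $L^2(\mathbb{R}^n)$, and hence $T_{m_\delta}^\mathcal{L}(f)\to T_m^\mathcal{L}(f)$ in $L^2(\mathbb{R}^n)$ as $\delta\to0^+$. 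On the other hand, analyticity of $\{W_t^\mathcal{L}\}_{t>0}$ and the Feynman--Kac bound give $|\partial_t^kW_t^\mathcal{L}(x,y)|\le C_k\,t^{-n/2-k}e^{-c|x-y|^2/t}$ for $k=0,1,2$, and with this, the $L^q$--$L^\infty$ mapping of $W_t^\mathcal{L}$ and the self-improvement of $B_q$ (so that we may take $q>n/2$) one checks $\int_0^\infty|\phi(t)\partial_tW_t^\mathcal{L}(f)(x)|\,dt<\infty$ for every $x$ and $f\in C_c^\infty(\mathbb{R}^n)$; dominated convergence then identifies the pointwise limit with the $L^2$ limit, so that $T_m^\mathcal{L}(f)(x)=\int_0^\infty\phi(t)\bigl(-\partial_tW_t^\mathcal{L}(f)\bigr)(x)\,dt$ for a.e.\ $x$.

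Next, fix $f\in C_c^\infty(\mathbb{R}^n)$, $\varepsilon>0$ and $x$, and set $\beta_\varepsilon(x)=\int_0^\infty\phi(t)\int_{|x-y|\le\varepsilon}\partial_tW_t^\mathcal{L}(x,y)f(y)\,dy\,dt$, which is absolutely convergent by the bounds above. Since $\int_0^\infty|\partial_tW_t^\mathcal{L}(x,y)|\,dt\le C|x-y|^{-n}$, Fubini's theorem applies on $\{|x-y|>\varepsilon\}$, and the splitting $\int_{|x-y|>\varepsilon}\partial_tW_t^\mathcal{L}(x,y)f(y)\,dy=\partial_tW_t^\mathcal{L}(f)(x)-\int_{|x-y|\le\varepsilon}\partial_tW_t^\mathcal{L}(x,y)f(y)\,dy$ combined with the representation above yields
\[
\int_{|x-y|>\varepsilon}K_\phi^\mathcal{L}(x,y)f(y)\,dy=T_m^\mathcal{L}(f)(x)+\beta_\varepsilon(x).
\]
Put $\alpha(\varepsilon)=-\int_0^\infty\phi(t)\,\partial_t\bigl(\int_{|x-y|\le\varepsilon}W_t(x,y)\,dy\bigr)\,dt$ (the inner integral does not depend on $x$); as $t\mapsto\int_{|x-y|\le\varepsilon}W_t(x,y)\,dy$ is non-increasing from $1$ to $0$, $\alpha$ is measurable with $|\alpha(\varepsilon)|\le\|\phi\|_\infty$. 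Thus \eqref{VP1} reduces to proving $\alpha(\varepsilon)f(x)+\beta_\varepsilon(x)\to0$ as $\varepsilon\to0^+$, a.e.; writing $f(y)=f(x)+(f(y)-f(x))$ and using the definition of $\alpha$ this becomes
\begin{multline*}
\int_0^\infty\phi(t)\int_{|x-y|\le\varepsilon}\partial_tW_t^\mathcal{L}(x,y)\bigl(f(y)-f(x)\bigr)\,dy\,dt\\
+\,f(x)\int_0^\infty\phi(t)\int_{|x-y|\le\varepsilon}\partial_t\bigl[W_t^\mathcal{L}(x,y)-W_t(x,y)\bigr]\,dy\,dt\ \longrightarrow\ 0 .
\end{multline*}

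The heart of the proof is estimating these two remainders. In the first, Taylor's formula gives $f(y)=f(x)+\nabla f(x)\cdot(y-x)+R(x,y)$ with $|R(x,y)|\le C|x-y|^2$: the $R$-term is dominated by $C\|\phi\|_\infty\int_0^\infty\int_{|x-y|\le\varepsilon}t^{-n/2-1}e^{-c|x-y|^2/t}|x-y|^2\,dy\,dt\le C\|\phi\|_\infty\varepsilon^2$, while the $\nabla f(x)\cdot(y-x)$-term is unchanged after subtracting its analogue with $W_t$ in place of $W_t^\mathcal{L}$ (that analogue vanishes because $W_t(x,x-z)=W_t(x,x+z)$ is even in $z$). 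Hence everything reduces to integrals of $\partial_t[W_t^\mathcal{L}(x,y)-W_t(x,y)]$ over $\{|x-y|\le\varepsilon\}$ --- once weighted by $y-x$, once not --- and these are handled by the standard perturbation estimate $|\partial_t^k[W_t^\mathcal{L}(x,y)-W_t(x,y)]|\le C(\sqrt t/\rho(x))^{\mu}t^{-n/2-k}e^{-c|x-y|^2/t}$, valid for $\sqrt t\le\rho(x)$ (here $\rho$ is the critical radius of $V$ and $\mu=2-n/q>0$; it follows from Duhamel's formula, the reverse H\"older inequality for $V$, and analyticity), together with $|W_t^\mathcal{L}(x,y)-W_t(x,y)|\le Ct^{-n/2}e^{-c|x-y|^2/t}$ and the rapid decay of $W_t^\mathcal{L}$ when $\sqrt t\ge\rho(x)$. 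Splitting the $t$-integral as $\int_0^{\varepsilon^2}+\int_{\varepsilon^2}^{\rho(x)^2}+\int_{\rho(x)^2}^\infty$ (for $\varepsilon<\rho(x)$) and using $\int_{|x-y|\le\varepsilon}t^{-n/2-1}e^{-c|x-y|^2/t}\,dy\le C\min(t^{-1},\varepsilon^n t^{-n/2-1})$, the extra factor $t^{\mu/2}$ makes each piece $O(\rho(x)^{-\mu}\varepsilon^{\mu})$ or $O(\rho(x)^{-n}\varepsilon^{n})$, so the whole expression tends to $0$; this proves \eqref{VP1}. I expect this comparison to be the main obstacle: $K_\phi^\mathcal{L}(x,y)$ need not be absolutely integrable against $f$ near $y=x$, so cancellation in $t$ must be exploited, and without the quantitative gain $(\sqrt t/\rho(x))^{\mu}$ the $t$-integral would diverge as $t\to0^+$.

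Finally, for \eqref{VP2}: rescaling $t\mapsto\varepsilon^2t$ in the definition of $\alpha$ shows $\alpha(\varepsilon)=\int_0^\infty\phi(\varepsilon^2/a)\,d\nu(a)$ for a fixed probability measure $\nu$ on $(0,\infty)$, so if $\lim_{t\to0^+}\phi(t)=\phi(0^+)$ exists then $\alpha(\varepsilon)\to\phi(0^+)$ by dominated convergence. Combining this with \eqref{VP1}, $\int_{|x-y|>\varepsilon}K_\phi^\mathcal{L}(x,y)f(y)\,dy=T_m^\mathcal{L}(f)(x)-\alpha(\varepsilon)f(x)+o(1)\longrightarrow T_m^\mathcal{L}(f)(x)-\phi(0^+)f(x)$, which is \eqref{VP2}.
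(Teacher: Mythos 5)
Your argument reaches the right conclusion and uses the same underlying ingredients (the pointwise and $\partial_t$ Gaussian bounds for $W_t^\mathcal{L}$ and the perturbation gain $\bigl(\sqrt t/\rho(x)\bigr)^{\mu}$ for $W_t^\mathcal{L}-W_t$ from \cite{DGMTZ} and \cite{BFHR}), but the organization is genuinely different from the paper's. The paper dualizes against $g\in C_c^\infty(\mathbb{R}^n)$ to get $\langle T_m^\mathcal{L}f,g\rangle=\int_0^\infty\phi(v)\bigl(-\tfrac{d}{dv}\bigr)\langle W_v^\mathcal{L}f,g\rangle\,dv$, splits $W_v^\mathcal{L}=W_v+(W_v^\mathcal{L}-W_v)$, and shows the difference produces a locally \emph{absolutely integrable} kernel $K_\phi^\mathcal{L}-K_\phi$; this reduces the theorem to the Euclidean identity \eqref{des2.6}--\eqref{des2.7}, which the paper proves separately in the appendix via Green's formula (integration by parts transfers the singularity of $\partial_t W_t$ to $\Delta f$, and the surface terms on $\{|x-z|=\varepsilon\}$ produce $\alpha(\varepsilon)$). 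You avoid both the dualization and the Green's-formula appendix: you establish the pointwise semigroup representation of $T_m^\mathcal{L}$ directly by truncation in $t$, cut at $\{|x-y|\le\varepsilon\}$, and instead of integration by parts you Taylor-expand $f$ at $x$ and absorb the zeroth-order term into an explicit $\alpha(\varepsilon)$ built from the Euclidean kernel (a change of variables shows your $\alpha(\varepsilon)=-\int_0^\infty\phi(t)\,\partial_t\!\int_{|y|\le\varepsilon}W_t(0,y)\,dy\,dt$ coincides with the paper's $\tfrac{1}{\Gamma(n/2)}\int_0^\infty\phi(\varepsilon^2/4u)e^{-u}u^{n/2-1}du$). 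The near-diagonal remainder is then killed by parity (for the first-order Taylor term), the crude $\varepsilon^2$ gain (for the second-order remainder), and the $(\sqrt t/\rho(x))^{\mu}$ perturbation gain (for the $W_t^\mathcal{L}-W_t$ pieces). Each route buys something: the paper's cleanly compartmentalizes the Euclidean material and needs only absolute convergence of the difference kernel, making the Schr\"odinger part robust to the precise Laplacian mechanism; yours is shorter and more self-contained, making the cancellation near the diagonal explicit and dispensing with Green's formula. One small point worth tightening: your justification that $\int_0^\infty|\phi(t)\,\partial_tW_t^\mathcal{L}(f)(x)|\,dt<\infty$ ``for every $x$'' is vaguely attributed to $L^q$--$L^\infty$ mapping and self-improvement of $B_q$; the cleanest argument near $t=0$ is precisely the split $\partial_tW_t^\mathcal{L}f=W_t(\Delta f)+\partial_t(W_t^\mathcal{L}-W_t)(f)$, bounding the second piece by $C\|f\|_\infty(\sqrt t/\rho(x))^{\mu}t^{-1}$, which is integrable since $\mu>0$; as written the claim is opaque.
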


\begin{teo}\label{Lp}
Suppose that $m(\lambda )=\lambda \int_0^\infty e^{-\lambda t}\phi
(t)dt$, $\lambda \in (0,\infty )$, where $\phi \in L^\infty
(0,\infty )$. Then $T_m^\mathcal{L}$ can be extended to $L^p(
\mathbb{R}^n)$ as a bounded operator from $L^p(\mathbb{R}^n)$ into
itself, for every $1<p<\infty $, and as a bounded operator from
$L^1(\mathbb{R}^n)$ into $L^{1,\infty }(\mathbb{R}^n)$. Moreover,
this extension can be given by (\ref{VP1}) and, when  the limit
$\lim_{t \rightarrow 0^+}\phi(t)=\phi(0^+)$ exists, by (\ref{VP2}),
for every $f \in L^p(\mathbb R^n)$, $1 \leq p< \infty$.
\end{teo}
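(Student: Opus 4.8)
The strategy is to show that the operator $T_m^\mathcal{L}$ fits into the framework of Calderón--Zygmund theory adapted to the semigroup $\{W_t^\mathcal{L}\}_{t>0}$, using the Feynman--Kac bound $|W_t^\mathcal{L}(x,y)|\leq CW_t(x,y)$ as the essential ingredient. First I would record that, since $\phi\in L^\infty(0,\infty)$ and $m(\lambda)=\lambda\int_0^\infty e^{-\lambda t}\phi(t)\,dt$, the multiplier $m$ is bounded on $(0,\infty)$ with $\|m\|_\infty\leq\|\phi\|_\infty$; hence by the spectral theorem $T_m^\mathcal{L}$ is bounded on $L^2(\mathbb{R}^n)$ with norm at most $\|\phi\|_\infty$. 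This gives the endpoint $p=2$ for free, and the whole problem reduces to a Calderón--Zygmund estimate for the kernel $K_\phi^\mathcal{L}(x,y)=-\int_0^\infty\phi(t)\,\partial_t W_t^\mathcal{L}(x,y)\,dt$, after which the Calderón--Zygmund theorem yields weak $(1,1)$ and strong $(p,p)$ for $1<p<2$, and duality (or interpolation together with $p=2$) covers $2<p<\infty$.

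The analytic core is therefore to verify the two standard kernel conditions: the size estimate
\[
|K_\phi^\mathcal{L}(x,y)|\leq \frac{C}{|x-y|^n},\qquad x\neq y,
\]
and the regularity (Hörmander) estimate, for which it suffices to show
\[
|\nabla_x K_\phi^\mathcal{L}(x,y)|+|\nabla_y K_\phi^\mathcal{L}(x,y)|\leq \frac{C}{|x-y|^{n+1}},\qquad x\neq y.
\]
For the size bound I would split $\int_0^\infty = \int_0^{|x-y|^2}+\int_{|x-y|^2}^\infty$ and use $|\partial_t W_t^\mathcal{L}(x,y)|\leq \frac{C}{t}W_{ct}(x,y)$, a pointwise bound on the time-derivative of the heat kernel that follows from Cauchy's integral formula for the analytic semigroup combined with the Feynman--Kac domination; then $\int_0^\infty \frac{1}{t}W_{ct}(x,y)\,dt$ is computed directly and gives exactly the $|x-y|^{-n}$ behaviour (the $t$-integral of $t^{-1-n/2}e^{-c|x-y|^2/t}$ over $(0,\infty)$ is a constant multiple of $|x-y|^{-n}$). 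For the gradient estimates one needs in addition a pointwise bound of the form $|\nabla_x W_t^\mathcal{L}(x,y)|\leq \frac{C}{\sqrt t}W_{ct}(x,y)$ and a mixed bound $|\partial_t\nabla_x W_t^\mathcal{L}(x,y)|\leq \frac{C}{t\sqrt t}W_{ct}(x,y)$; these are exactly the kernel estimates established in the Schrödinger-operator literature (Dziubański--Zienkiewicz, Kurata, and related works) under the assumption $V\in B_q$ with $q\geq n/2$, and I would invoke them, after which the $t$-integration is identical to the size-estimate computation and produces the extra factor $|x-y|^{-1}$.

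Once the Calderón--Zygmund estimates are in hand, the boundedness statements follow from the classical Calderón--Zygmund theorem and duality, so the remaining point is to identify the extension with the principal-value formulas. Here I would use Theorem \ref{Valorprincipal}: for $f\in C_c^\infty(\mathbb{R}^n)$ the operator $T_m^\mathcal{L}(f)$ already agrees a.e.\ with the expression in \eqref{VP1} (and with \eqref{VP2} when $\phi(0^+)$ exists), and since $C_c^\infty(\mathbb{R}^n)$ is dense in $L^p(\mathbb{R}^n)$ for $1\leq p<\infty$ and $T_m^\mathcal{L}$ extends boundedly (into $L^p$ for $p>1$, into $L^{1,\infty}$ for $p=1$), a standard density-plus-maximal-operator argument extends the validity of the principal-value representation to all $f\in L^p(\mathbb{R}^n)$: one controls the truncated singular integrals $f\mapsto\sup_{\varepsilon>0}\big|\alpha(\varepsilon)f(x)+\int_{|x-y|>\varepsilon}K_\phi^\mathcal{L}(x,y)f(y)\,dy\big|$ by a maximal Calderón--Zygmund operator, which is itself of weak type $(1,1)$ and bounded on $L^p$, and then passes to the limit using the a.e.\ convergence on the dense class $C_c^\infty(\mathbb{R}^n)$.

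I expect the main obstacle to be the derivation of the pointwise kernel estimates for $W_t^\mathcal{L}(x,y)$ and its $x$- and $t$-derivatives: the Feynman--Kac inequality only dominates the kernel itself, not its derivatives, so the gradient bounds genuinely require the $B_q$ hypothesis and the harder heat-kernel analysis for Schrödinger operators; I would either cite these estimates from the existing literature or reprove them via the Duhamel formula $W_t^\mathcal{L}(x,y)=W_t(x,y)-\int_0^t\int_{\mathbb{R}^n}W_{t-s}(x,z)V(z)W_s^\mathcal{L}(z,y)\,dz\,ds$ together with the perturbation and interpolation arguments standard in that setting. The rest — the $t$-integrations, the Calderón--Zygmund machinery, and the density argument — is routine.
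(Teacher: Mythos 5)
Your overall skeleton (get $L^2$ from the spectral theorem, get the rest from Calder\'on--Zygmund theory, then use density plus a maximal-operator bound to extend the principal-value representation) is reasonable, but it takes a genuinely different route from the paper at the crucial step, and that route has a gap.

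The paper does \emph{not} try to prove that $K_\phi^\mathcal{L}$ is a standard Calder\'on--Zygmund kernel. Instead, following Shen's localization philosophy, it splits the \emph{Laplacian} multiplier $T_m$ (which is a classical CZ operator, with kernel $K_\phi$) into $T_{m,\ell}$ (the part of the truncated singular integral over $\varepsilon<|x-y|<\rho(x)$) plus $T_{m,g}$ (the part over $|x-y|\geq\rho(x)$), and then proves that the \emph{difference}
\[
T_m^\mathcal{L}(f)(x)-T_{m,\ell}(f)(x)
=\int_{|x-y|<\rho(x)}\bigl(K_\phi^\mathcal{L}(x,y)-K_\phi(x,y)\bigr)f(y)\,dy
+\int_{|x-y|\geq\rho(x)}K_\phi^\mathcal{L}(x,y)f(y)\,dy
\]
is pointwise dominated by the Hardy--Littlewood maximal function. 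The local piece uses only the bound on $\partial_t\bigl(W_t^\mathcal{L}-W_t\bigr)$ from [DGMTZ, (2.7)] for $t\leq\rho(x)^2$, and the global piece uses the sub-Gaussian decay of $\partial_tW_t^\mathcal{L}$ for $t\geq\rho(x)^2$. No spatial derivatives of $W_t^\mathcal{L}$ enter anywhere; the only known CZ kernel that is invoked is the Laplacian one, $K_\phi$.

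The gap in your plan is precisely where you step away from this perturbative scheme. You propose to verify directly the pointwise gradient bounds $|\nabla_x K_\phi^\mathcal{L}(x,y)|+|\nabla_yK_\phi^\mathcal{L}(x,y)|\lesssim|x-y|^{-n-1}$ by invoking $|\nabla_xW_t^\mathcal{L}(x,y)|\lesssim t^{-1/2}W_{ct}(x,y)$ and a mixed bound $|\partial_t\nabla_xW_t^\mathcal{L}(x,y)|\lesssim t^{-3/2}W_{ct}(x,y)$, which you say are ``established in the literature under $V\in B_q$ with $q\geq n/2$''. They are not: for $n/2\leq q<n$ the spatial gradient of the heat kernel does \emph{not} satisfy a uniform Gaussian bound (this is exactly the obstruction that restricts Shen's Riesz-transform estimates to $1<p<p_0(q)$ when $q<n$). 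The available estimates degrade by a factor involving $\rho$ outside the region $t\lesssim\rho(x)^2$, which is why the paper never leaves that region in the perturbative part and switches to a different mechanism (sub-Gaussian decay plus the maximal function) outside it. As written, your direct verification of the CZ gradient condition would fail for the full class $B_q$, $q\geq n/2$. If you want to pursue a ``direct'' route you would need to replace the pointwise gradient condition by a weaker integral H\"ormander-type condition proved from heat-kernel Gaussian bounds alone (Duong--McIntosh-type arguments), which is a substantially different argument from the one you sketched; otherwise you should follow the paper's Shen-style comparison with the Laplacian multiplier.

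The last part of your plan — using the boundedness of the maximal truncated operator together with a.e.\ convergence on $C_c^\infty$ to extend (\ref{VP1}) and (\ref{VP2}) to all of $L^p$, $1\leq p<\infty$ — is exactly what the paper does, and is correct once the maximal operator estimate is available (which the paper again obtains from the $T_m^*\!$--plus--maximal-function control, not from CZ estimates for $K_\phi^\mathcal{L}$ itself).
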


Note that since the semigroup of operators $\{W_t^\mathcal{L}\}_{t>0}$ is not conservative, the $L^p$-boundedness, $1<p<\infty$, of the Laplace transform type multipliers $T_m^\mathcal{L}$ can not be deduced from the result established in \cite[p. 121]{Stein}. The harmonic analysis operators (maximal operators, Riesz transforms  and Littlewood-Paley g-functions) in the Schr\"odinger setting have been studied in $L^p$-spaces by several authors in last years (see for instance, \cite{AST}, \cite{DGMTZ} and \cite{Sh1}). In order to show Theorems \ref{Valorprincipal} and \ref{Lp}, inspired in the procedure developed by Shen \cite{Sh1} to analyze Riesz transforms, we take advantage that $\mathcal{L}$ is a ``nice" perturbation of the Laplacian operator $-\Delta$. This fact allows us to write the multipliers in the Schr\"odinger setting, in some local sense, as perturbation of the corresponding multipliers associated to the Laplacian.

In the localization of our operators the function $\rho$ defined in \cite[p. 516]{Sh1} by
$$
\rho (x)=\sup\{r>0: \frac{1}{r^{n-2}}\int_{B(x,r)}V(y)dy\leq 1\},\,\,\,x\in \mathbb{R}^n,
$$
plays an important role. The main properties of this function $\rho$ can be encountered in \cite[\S 1]{Sh1}. We also use several properties of the heat kernel $W_t^\mathcal{L}(x,y)$ associated to the Schr\"odinger operator $\mathcal{L}$ that can be found, for instance, in \cite{DGMTZ}.

A Banach space $B$ is said to be  UMD when the Hilbert transform $H$ defined in a natural way on $L^p(\mathbb{R}^n)\bigotimes B$ can be extended to $L^p(\mathbb{R}^n,B)$ as a bounded operator from
$L^p(\mathbb{R}^n,B)$ into itself for some (equivalently, for every) $1<p<\infty$ (see \cite{Bo1} and \cite{Bu1}). Here, for every $1\leq p<\infty$, by $L^p(\mathbb{R}^n, B)$ we represent the Bochner-Lebesgue space of exponent $p$. UMD property is related to geometric properties of Banach spaces (\cite{Bu3}). In last years several authors have established connections between geometry of Banach spaces and harmonic analysis. In particular, characterizations of UMD, convexity or smoothness properties of a Banach space have been given in terms of $L^p$-boundedness of certain singular integrals or Littlewood-Paley g-functions (\cite{AST}, \cite{Hy1}, \cite{Hy2}, \cite{MTX} and \cite{Xu}). Here, inspired in the results of Guerre-Delabriere \cite{GD} related to the imaginary powers of the Laplacian, we characterize the Banach spaces having the UMD property by the $L^p$-boundedness of the imaginary power $\mathcal{L}^{i\gamma }$, $\gamma \in \mathbb{R}$, of the Schr\"odinger operator.

Let $\gamma \in \mathbb{R}$. We denote by $m_\gamma$ the function $m_\gamma (\lambda )=\lambda ^{i\gamma }$, $\lambda \in (0,\infty )$. It is clear that $m_\gamma (\lambda )=\lambda \int_0^\infty e^{-\lambda t}\phi _\gamma (t)dt$, $\lambda \in (0,\infty )$, where $\phi _\gamma (t)=t^{-i\gamma }/\Gamma (1-i\gamma )$, $t\in (0,\infty )$. We define, as usual, the imaginary power $\mathcal{L}^{i\gamma }$ of $\mathcal{L}$ by
$$
\mathcal{L}^{i\gamma }=T_{m_\gamma}^\mathcal{L}.
$$

According to Theorem \ref{Lp},  $\mathcal{L}^{i\gamma}$ can be extended to $L^p(\mathbb{R}^n)$ as a bounded operator from $L^p(\mathbb{R}^n)$ into itself, for every $1<p<\infty$. If $B$ is a Banach space we define $\mathcal{L}^{i\gamma}$ on $L^p(\mathbb{R}^n)\bigotimes B$, $1<p<\infty$, in a natural way.

\begin{teo}\label{UMD}
Let $B$ be a Banach space. Then, the following properties are equivalent:

(i) $B$ is a UMD space.

(ii) For every $\gamma \in \mathbb{R}$ and for some (equivalently, for any) $1<p<\infty$, the operator $\mathcal{L}^{i\gamma }$ can be extended to $L^p(\mathbb{R}^n,B)$ as a bounded operator from $L^p(\mathbb{R}^n,B)$ into itself.
\end{teo}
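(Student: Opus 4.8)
The plan is to reduce Theorem \ref{UMD} to the corresponding statement for the Euclidean Laplacian. Recall that $B$ is a UMD space if and only if, for every $\gamma\in\mathbb R$, the imaginary power $(-\Delta)^{i\gamma}$ extends to a bounded operator on $L^p(\mathbb R^n,B)$ for some (equivalently every) $1<p<\infty$ (Guerre-Delabrière \cite{GD}). Hence it is enough to prove that, for each fixed $\gamma$ and each $1<p<\infty$, $\mathcal L^{i\gamma}$ is bounded on $L^p(\mathbb R^n,B)$ if and only if $(-\Delta)^{i\gamma}$ is. The guiding principle, exactly as for the scalar estimates in Theorems \ref{Valorprincipal} and \ref{Lp}, is that $\mathcal L$ is a controlled perturbation of $-\Delta$ at scales below Shen's function $\rho$, so that $\mathcal L^{i\gamma}$ differs from $(-\Delta)^{i\gamma}$, locally in $\rho$-units, by an operator which is ``universally'' bounded, i.e. bounded on $L^p(\mathbb R^n,B)$ for every Banach space $B$.

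To make this precise I would use the principal value representation (\ref{VP1}): since $\phi_\gamma(t)=t^{-i\gamma}/\Gamma(1-i\gamma)$ has no limit at $t=0^+$ when $\gamma\neq 0$, the operator $\mathcal L^{i\gamma}$ is given off the diagonal by $K_{\phi_\gamma}^{\mathcal L}(x,y)=-\int_0^\infty\phi_\gamma(t)\,\partial_tW_t^{\mathcal L}(x,y)\,dt$ together with a correction term $\alpha(\varepsilon)f(x)$, and the same holds for $-\Delta$ with the Gaussian kernel $W_t$; the correction depends only on the behaviour of the kernels as $t\to0^+$, so it is the same for $\mathcal L$ and for $-\Delta$. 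Fixing a smooth cut-off $\eta(x,y)$ adapted to $\rho$, with $\eta=1$ on $\{|x-y|<\rho(x)\}$ and $\eta=0$ on $\{|x-y|>2\rho(x)\}$, I split each operator into a local and a global part and prove: (a) the kernel $k$ of $[\mathcal L^{i\gamma}]_{\mathrm{glob}}$ satisfies $\sup_x\int_{\mathbb R^n}|k(x,y)|\,dy+\sup_y\int_{\mathbb R^n}|k(x,y)|\,dx<\infty$ — after integrating by parts in $t$, so that $K_{\phi_\gamma}^{\mathcal L}(x,y)=\frac{i\gamma}{\Gamma(1-i\gamma)}\int_0^\infty t^{-i\gamma-1}W_t^{\mathcal L}(x,y)\,dt$, the improved Gaussian bound $|W_t^{\mathcal L}(x,y)|\le C_N(1+\sqrt t/\rho(x)+\sqrt t/\rho(y))^{-N}W_t(x,y)$ coming from $V\in B_q$ (see \cite{DGMTZ}) gives $|K_{\phi_\gamma}^{\mathcal L}(x,y)|\le C_N\rho(x)^N|x-y|^{-n-N}$ on $\{|x-y|>\rho(x)\}$, and the integrability in $x$ follows from the comparability properties of $\rho$ in \cite[\S 1]{Sh1}; (b) the difference $[\mathcal L^{i\gamma}]_{\mathrm{loc}}-[(-\Delta)^{i\gamma}]_{\mathrm{loc}}$ has the locally integrable kernel $(K_{\phi_\gamma}^{\mathcal L}-K_{\phi_\gamma}^{-\Delta})(x,y)\eta(x,y)$ (the correction terms cancel, so no principal value survives), and the heat kernel perturbation estimate $|W_t^{\mathcal L}(x,y)-W_t(x,y)|\le C(\sqrt t/\rho(x))^{\delta}W_{ct}(x,y)$ for $\sqrt t\le\rho(x)$ and some $\delta>0$ (see \cite{DGMTZ},\cite{Sh1}) bounds it by $C\rho(x)^{-\delta}|x-y|^{-n+\delta}$ on $\{|x-y|<2\rho(x)\}$. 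In both cases the Schur test applies and is insensitive to the Banach space, so $[\mathcal L^{i\gamma}]_{\mathrm{glob}}$ and $[\mathcal L^{i\gamma}]_{\mathrm{loc}}-[(-\Delta)^{i\gamma}]_{\mathrm{loc}}$ are bounded on $L^p(\mathbb R^n,B)$ for every $B$ and every $1\le p\le\infty$.

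It then remains to show that $[(-\Delta)^{i\gamma}]_{\mathrm{loc}}$ is bounded on $L^p(\mathbb R^n,B)$ for every $\gamma\in\mathbb R$ if and only if $B$ is UMD. For the implication from UMD, localising $(-\Delta)^{i\gamma}$ at the fixed scale $1$ gives a convolution operator whose symbol differs from $c_\gamma|\xi|^{2i\gamma}$ by a symbol satisfying Mihlin's conditions, hence is bounded on $L^p(\mathbb R^n,B)$ for $B$ UMD by the vector-valued Mihlin multiplier theorem (see \cite{Bo1}); by dilation the same bound holds at every scale $\rho(x_k)$, and one patches these uniform estimates along a Whitney-type covering $\{B(x_k,\rho(x_k))\}$ of $\mathbb R^n$ with bounded overlap — using again the comparability of $\rho$ to pass between the $\rho$-cut-off $\eta$ and the constant-scale cut-offs — to obtain the boundedness of $[(-\Delta)^{i\gamma}]_{\mathrm{loc}}$. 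Conversely, if $[(-\Delta)^{i\gamma}]_{\mathrm{loc}}$ is bounded on $L^p(\mathbb R^n,B)$, then restricting it to a ball $B(x_0,r)$ with $r$ small compared to $\rho(x_0)$, where $\eta\equiv 1$ and it coincides with $(-\Delta)^{i\gamma}$ acting on functions supported in that ball, and dilating $B(x_0,r)$ to $B(0,R)$ for arbitrary $R>0$ (using the dilation invariance of $(-\Delta)^{i\gamma}$), one obtains $\|\chi_{B(0,R)}(-\Delta)^{i\gamma}(\chi_{B(0,R)}f)\|_{L^p(\mathbb R^n,B)}\le C\|f\|_{L^p(\mathbb R^n,B)}$ with $C$ independent of $R$; letting $R\to\infty$ recovers the boundedness of $(-\Delta)^{i\gamma}$ on $L^p(\mathbb R^n,B)$, whence $B$ is UMD by \cite{GD}. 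Combining with (a) and (b), $\mathcal L^{i\gamma}$ is bounded on $L^p(\mathbb R^n,B)$ for every $\gamma$ (and for one, hence every, $p$) if and only if $B$ is UMD; the $p$-independence in (ii) is automatic because the Schur-type bounds hold for all $1\le p\le\infty$ and UMD does not depend on $p$.

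The step I expect to be the genuine obstacle is (b) together with the $x$-integrability in (a): extracting the gain $(\sqrt t/\rho(x))^{\delta}$ from the comparison of the heat kernels of $\mathcal L$ and $-\Delta$, and converting the resulting pointwise kernel bounds into Schur integrability uniformly in $x$, forces one to combine those bounds with the merely Hölder-type slow variation of $\rho$ and to split carefully into dyadic annuli in $\rho$-units. A further, more routine but still delicate, point is the bookkeeping of the principal value and of the non-translation-invariant cut-off $\eta$ when transferring the model localized multipliers to the Whitney covering.
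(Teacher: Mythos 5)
Your decomposition is in fact identical to the paper's: the paper writes $S_{m_\gamma}=T_{m_\gamma}^{\mathcal L}-T_{m_\gamma,\ell}=S_{m_\gamma,1}+S_{m_\gamma,2}$, where $S_{m_\gamma,1}$ is your piece (b) and $S_{m_\gamma,2}$ is your piece (a); the paper controls both by the Hardy--Littlewood maximal function (hence also universally), while you use a Schur test, which is a legitimate alternative and delivers the same conclusion that $\mathcal L^{i\gamma}$ is bounded on $L^p(\mathbb R^n,B)$ if and only if the ``local Laplacian'' piece is. Your converse dilation argument (restrict to a $\rho$-ball, dilate, let $R\to\infty$) plays exactly the role of the paper's Abu-Falahah--Stinga--Torrea-style scaling argument and is sound, though it lands on the same cliff edge: it produces boundedness of $(-\Delta)^{i\gamma}$ on $L^p(\mathbb R^n,B)$, from which UMD must still be extracted.

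The genuine gap is precisely there. You cite Guerre-Delabri\`ere \cite{GD} for the equivalence ``$B$ is UMD $\iff(-\Delta)^{i\gamma}$ is bounded on $L^p(\mathbb R^n,B)$ for all $\gamma$,'' but \cite{GD} proves only the one-dimensional case $\bigl(-\tfrac{d^2}{dx^2}\bigr)^{i\gamma}$ on $L^p(\mathbb R,B)$. The direction UMD $\Rightarrow$ boundedness in dimension $n$ is indeed a vector-valued Mihlin-type result (the paper quotes Zimmermann \cite{Zim}), but the implication you actually need at the end of your argument --- boundedness of $(-\Delta)^{i\gamma}$ on $L^p(\mathbb R^n,B)$ for all $\gamma$ implies UMD --- is not in \cite{GD} and is not automatic. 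The paper devotes Proposition~\ref{GDn} to exactly this: it splits $(-\Delta)^{i\gamma}$ into a low- and a high-frequency part, transfers the high-frequency part to $\mathbb T^n$ by the Stein--Weiss argument, reduces from $\mathbb T^n$ to $\mathbb T$ by tensoring, and only then invokes the one-dimensional criterion of \cite{GD}. Without this transference step your proof does not close.

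A secondary and smaller point: your forward direction proposes to bound $[(-\Delta)^{i\gamma}]_{\mathrm{loc}}$ directly by a Mihlin estimate at each $\rho$-scale plus a Whitney patching, but the $\rho$-adapted cut-off $\eta(x,y)$ couples neighbouring Whitney cubes of merely comparable (not equal) sizes, so the patching is not a sum of independent localized multipliers and needs a careful treatment of the off-diagonal interactions. The paper sidesteps this entirely: once $T_{m_\gamma}$ is known to be bounded on $L^p(\mathbb R^n,B)$ (by Proposition~\ref{GDn}), the global part $T_{m_\gamma,g}$ is dominated pointwise by the truncated maximal operator $T_{m_\gamma}^*$, which is bounded by vector-valued Calder\'on--Zygmund theory since $K_{\phi_\gamma}$ is a standard kernel; then $T_{m_\gamma,\ell}=T_{m_\gamma}-T_{m_\gamma,g}$. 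You could shorten your argument considerably by adopting this route instead of the Whitney patching.
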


This paper is organized as follows. In Section 2 we present a proof of Theorem \ref{Valorprincipal}. Theorems \ref{Lp} and \ref{UMD} are proved in Section 3. Finally, we present in the Appendix, for the sake of completeness, a proof of a version of Theorem \ref{Valorprincipal} in the Laplacian (classical) case.

Throughout this paper by $C$ and $c$ we always denote positive constants that can change in each occurrence.

\section{Proof of Theorem \ref{Valorprincipal}}

Assume that $\phi \in L^\infty (0,\infty )$ and define the function $m$ as follows
$$
m(\lambda )=\lambda \int_0^\infty e^{-\lambda v}\phi (v)dv,\quad \lambda \in [0,\infty ).
$$

It is clear that $m$ is also in $L^\infty (0,\infty )$. The spectral multiplier $T_m^\mathcal{L}$ in the Schr\"odinger setting associated with $m$ is defined by
$$
T_m^\mathcal{L}(f)=\int_{[0,\infty )}m(\lambda )E_\mathcal{L}(d\lambda )f,\quad f\in L^2(\mathbb{R}^n),
$$
where $E_\mathcal{L}$ represents the spectral measure for the Schr\"odinger operator $\mathcal{L}$. It is well known that $T_m^\mathcal{L}$ is a bounded operator from $L^2(\mathbb{R}^n)$ into itself.

We are going to prove Theorem \ref{Valorprincipal}.
Let $f,g\in C_c^\infty(\mathbb{R}^n)$. We can write
$$
\langle T_m^\mathcal{L}(f),g\rangle=\left\langle\int_{[0,\infty )}m(\lambda )E_\mathcal{L}(d\lambda )f,g\right\rangle=\int_{[0,\infty )}m(\lambda )d\mu _{f,g;\mathcal{L}}(\lambda ),
$$
where by $\mu _{f,g;\mathcal{L}}$ we denote the measure defined by
$$
\mu _{f,g;\mathcal{L}}(A)=\langle E_\mathcal{L}(A)f,g\rangle,
$$
for every Borel set $A\subset [0,\infty )$. The set function $\mu _{f,g;\mathcal{L}}$ is a complex measure on $[0,\infty )$ satisfying that $|\mu _{f,g;\mathcal{L}}|([0,\infty ))\leq ||f||_2||g||_2$, where $|\mu _{f,g;\mathcal{L}}|$ represents the total variation measure of $\mu _{f,g;\mathcal{L}}$.

We have that
\begin{eqnarray*}
\langle T_m^\mathcal{L}(f),g\rangle&=&\int_{[0,\infty )}\lambda \int_0^\infty e^{-\lambda v}\phi (v)dvd\mu _{f,g;\mathcal{L}}(\lambda )\\
&=&\int_0^\infty \phi (v)\int_{[0,\infty )}\lambda e^{-\lambda v}d\mu _{f,g;\mathcal{L}}(\lambda )dv\\
&=&\int_0^\infty \phi (v)\int_{[0,\infty )}\left(-\frac{\partial}{\partial v}\right)(e^{-\lambda v})d\mu _{f,g;\mathcal{L}}(\lambda )dv.
\end{eqnarray*}

Here, we can interchange the order of integration because
$$
\int_{[0,\infty )}\int_0^\infty \lambda e^{-\lambda v}|\phi (v)|dvd|\mu _{f,g;\mathcal{L}}|(\lambda )\leq ||\phi ||_\infty |\mu _{f,g;\mathcal{L}}|([0,\infty ))<\infty .
$$

Since
$$
\left|\frac{e^{-\lambda (v+h)}-e^{-\lambda v}}{h}\right|\leq \lambda e^{\lambda (|h|-v)}\leq \lambda e^{-\lambda v/2},\quad v,\lambda >0 \mbox{ and }|h|<\frac{v}{2},
$$
and
$$
\int_{[0,\infty )}\lambda e^{-\lambda v/2}d|\mu _{f,g;\mathcal{L}}|(\lambda )\leq \frac{2}{v}|\mu _{f,g;\mathcal{L}}|([0,\infty ))<\infty ,\quad v>0,
$$
we can differentiate under the integral sign and write
\begin{eqnarray*}
\langle T_m^\mathcal{L}(f),g\rangle &=&\int_0^\infty \phi (v)\left(-\frac{d}{dv}\right)\int_{[0,\infty )}e^{-\lambda v}d\mu _{f,g;\mathcal{L}}(\lambda )dv\\
&=&\int_0^\infty \phi (v)\left(-\frac{d}{dv}\right)<W_v^\mathcal{L}(f),g>dv\\
&=&\int_0^\infty \phi (v)\left(-\frac{d}{dv}\right)\int_{\mathbb{R}^n}\int_{\mathbb{R}^n}W_v^\mathcal{L}(x,y)f(y)dy\overline{g(x)}dxdv.
\end{eqnarray*}
We have that
$$
\int_{\mathbb{R}^n}\int_{\mathbb{R}^n}|W_v^\mathcal{L}(x,y)-W_v(x,y)||f(y)||g(x)|dydx<\infty, \quad v\in (0,\infty ),
$$
and
$$
\int_{\mathbb{R}^n}\int_{\mathbb{R}^n}\left|\frac{\partial}{\partial v}(W_v^\mathcal{L}(x,y)-W_v(x,y))\right||f(y)||g(x)|dydx<\infty, \quad v\in (0,\infty ).
$$

Indeed, according to \cite[(2.2) and (2.7)]{DGMTZ} it follows that
\begin{eqnarray*}
\lefteqn{\int_{\mathbb{R}^n}\int_{\mathbb{R}^n}|W_v^\mathcal{L}(x,y)-W_v(x,y)||f(y)||g(x)|dydx }\\
&& +\int_{\mathbb{R}^n}\int_{\mathbb{R}^n}\left|\frac{\partial}{\partial v}(W_v^\mathcal{L}(x,y)-W_v(x,y))\right||f(y)||g(x)|dydx\\
&\leq C&\frac{1+v}{v^{n/2+1}}\int_{\mathbb{R}^n}\int_{\mathbb{R}^n}e^{-c\frac{|x-y|^2}{v}}|f(y)||g(x)|dydx\\
&\leq C&\frac{1+v}{v^{n/2+1}}\int_{\mathbb{R}^n}|f(y)|dy\int_{\mathbb{R}^n}|g(x)|dx<\infty ,\quad v\in (0,\infty ).
\end{eqnarray*}
Hence, the function $\psi(v) =\int_{\mathbb{R}^n}\int_{\mathbb{R}^n} (W_v^{\mathcal{L}}(x,y)-W_v(x,y)) f(y) \  \overline{g(x)} \ dy \ dx, \ \ v\in(0,\infty)$, is differentiable in $(0,\infty)$ and
\begin{equation*}
\frac {d}{dv} \psi(v) =\int_{\mathbb{R}^n}\int_{\mathbb{R}^n} \frac{\partial}{\partial v} (W_v^{\mathcal{L}}(x,y)-W_v(x,y)) f(y) \ \overline{g(x)} \ dy \ dx, \ \ v\in(0,\infty).
\end{equation*}
We can write
\begin{eqnarray} \label {des2.0}
\left\langle T_m^{\mathcal{L}} (f), g \right\rangle &=& \int_0^\infty \phi(v) \left(-\frac {d}{dv} \right)\int_{\mathbb{R}^n}\int_{\mathbb{R}^n} W_v(x,y) f(y) \ dy  \ \overline{g(x)} \ dx \ dv \nonumber \\
&& + \int_0^\infty \phi(v)  \int_{\mathbb{R}^n}\int_{\mathbb{R}^n}
\left(-\frac {\partial}{\partial v}\right)
\left(W_v^{\mathcal{L}}(x,y)-W_v(x,y)\right)f(y) \  \overline{g(x)} \ dy \ dx \ dv.
\end{eqnarray}
Also, we have that
\begin{equation} \label{des2.1}
  \int_{\mathbb{R}^n}\int_{\mathbb{R}^n}\int_0^\infty | \phi(v)|
\left|\frac {\partial}{\partial v}
\left(W_v^{\mathcal{L}}(x,y)-W_v(x,y)\right)\right| |f(y)|\  |\overline{g(x)}| \ dv \ dx \ dy<\infty,
\end{equation}
and
\begin{equation} \label{des2.2}
  \int_{\mathbb{R}^n}\int_0^\infty
\left|\frac {\partial}{\partial v}
\left(W_v^{\mathcal{L}}(x,y)-W_v(x,y)\right)\right| |f(y)| \ | \phi(v)| \ dv \ dy < \infty.
\end{equation}
Indeed, to see (\ref{des2.2}) we write
\begin{eqnarray*}
\lefteqn{\int_{\mathbb{R}^n}\int_0^\infty
\left|\frac {\partial}{\partial v}
\left(W_v^{\mathcal{L}}(x,y)-W_v(x,y)\right)\right| |f(y)|| \phi(v)|  \ dv \ dy} \\
&\leq& || \phi||_\infty\left(\int_{\mathbb{R}^n}\int_0^{\rho(x)^2} +\int_{\mathbb{R}^n}\int_{\rho(x)^2}^\infty \right)
\left|\frac {\partial}{\partial v}
\left(W_v^{\mathcal{L}}(x,y)-W_v(x,y)\right)\right| |f(y)| \ dv  \ dy  \\
&=& B_1(x) +B_2(x).
\end{eqnarray*}
According to \cite[(2.7)]{DGMTZ} we get
\begin{equation} \label{des2.3}
B_2(x) \leq C \int_{\mathbb{R}^n}\int_{\rho(x)^2}^\infty  \frac{e^{-c\frac{|x-y|^2}{v}}}{v^{\frac n2 +1}} |f(y)| \  dv \ dy \leq \frac{C}{\rho (x)^n}, \ \ x\in \mathbb{R}^n.
\end{equation}
Since $0<\rho(x)<\infty$, it follows that $B_2(x)<\infty$, $x\in \mathbb{R}^n$.\\

By proceeding as in \cite[p. 15 -17]{BFHR} we can obtain that
\begin{equation} \label{des2.4}
B_1(x) \leq C ||f||_\infty, \ x \in \mathbb{R}^n.
\end{equation}
Thus, we have proved (\ref{des2.2}). Moreover, by using \cite[Lemma 1.4]{Sh1}, (\ref{des2.3}) and (\ref{des2.4}) imply also (\ref{des2.1}). Then (\ref{des2.0}) can be written
\begin{eqnarray} \label {des2.5}
\left\langle T_m^{\mathcal{L}} (f), g \right\rangle &=& \int_0^\infty \phi(v) \left(-\frac {d}{dv} \right)\int_{\mathbb{R}^n}\int_{\mathbb{R}^n} W_v(x,y) f(y) \ dy  \ \overline{g(x)} \ dx \ dv \nonumber \\
&+&  \int_{\mathbb{R}^n} \left( \lim_{\varepsilon\rightarrow 0^+}  \int_{|x-y|>\varepsilon}
\left(K_\phi^{\mathcal{L}}(x,y)-K_\phi(x,y)\right)f(y) \ dy \right)  \overline{g(x)} \ dx,
\end{eqnarray}
where
\begin{equation*}
K_\phi^{\mathcal{L}} (x,y) = -\int_0^\infty \phi(v)\frac{\partial}{\partial v} W_v^{\mathcal{L}}(x,y) \  \  dv, \ \  x,y\in \mathbb{R}^n, x\neq y,
\end{equation*}
and
\begin{equation*}
K_\phi (x,y) = -\int_0^\infty \phi(v)\frac{\partial}{\partial v} W_v (x,y) \  \ dv, \ \  x,y\in \mathbb{R}^n, x\neq y.
\end{equation*}
On the other hand, as above we can see that
\begin{equation}\label{Lapmul}
\left\langle T_m (f), g \right\rangle = \int_0^\infty \phi(v) \left(-\frac{d}{d v} \right) \int_{\mathbb{R}^n}\int_{\mathbb{R}^n} W_v(x,y) \ f(y) \ dy \ \overline{g(x)} \ dx \ dv.
\end{equation}
where $T_m$ represents the spectral multiplier associated with
$-\Delta$ defined by $m$. Moreover, we can write
\begin{equation} \label{des2.6}
T_m (f)(x) =\lim_{\varepsilon\rightarrow 0^+} \left( \alpha(\varepsilon) \ f(x) + \int_{|x-y|>\varepsilon} K_\phi (x,y) \ f(y) \ dy \right), \,\,a.e. \ x \in \mathbb{R}^n.
\end{equation}
where
$$
\alpha(\varepsilon)=\frac{1}{\Gamma(\frac{n}{2})}\int_0^\infty\phi\Big(\frac{\varepsilon^2}{4u}\Big)e^{-u}u^{\frac{n}{2}-1}du,
\;\;\varepsilon >0.
$$
Also, if there exists the limit $\lim_{t\rightarrow 0^+} \phi(t) = \phi(0^+)$, then $\lim_{\varepsilon \rightarrow 0^+} \alpha(\varepsilon)= \phi(0^+)$,  and
\begin{equation} \label{des2.7}
T_m(f)(x) =\phi(0^+)f(x) + \lim_{\varepsilon \rightarrow 0^+} \int_{|x-y|>\varepsilon} K_\phi (x,y) \ f(y) \ dy, \ \ a.e. \ x \in \mathbb{R}^n.
\end{equation}
Although we are sure that the properties (\ref{des2.6}) and (\ref{des2.7}) are known, we include in the appendix complete proofs for these properties of $T_m$, for the sake the interested reader.

By combining (\ref{des2.5}), (\ref{Lapmul}), (\ref{des2.6}) and
(\ref{des2.7}) we obtain that
\begin{equation*}
T_m^{\mathcal{L}} (f)(x) =\lim_{\varepsilon \rightarrow 0^+} \left( \alpha(\varepsilon) \ f(x) + \int_{|x-y|>\varepsilon} K_\phi^{\mathcal{L}} (x,y) \ f(y) \ dy\right), \ \ a.e. \ x\in \mathbb{R}^n,
\end{equation*}
and
\begin{equation*}
T_m^{\mathcal{L}} (f)(x) =  \phi(0^+) \ f(x) + \lim_{\varepsilon \rightarrow 0^+} \int_{|x-y|>\varepsilon} K_\phi^{\mathcal{L}} (x,y) \ f(y) \ dy, \ \ a.e. \ x\in \mathbb{R}^n,
\end{equation*}
provided that there exists the limit $ \phi(0^+)= \lim_{t\rightarrow 0^+} \phi(t)$.

\section{Proof of Theorems \ref{Lp} and \ref{UMD}}

In this section we present a proof for Theorems \ref{Lp} and \ref{UMD}. Firstly we prove Theorem \ref{UMD}.

\subsection{Proof of Theorem \ref{UMD}}

Guerre-Delabriere \cite[Theorem, p. 402]{GD} established that a
Banach space $B$ is UMD if, only if, for every $\gamma\in
\mathbb{R}$, the operator $\Big(-\frac{d^2}{dx^2}\Big)^{i\gamma}$
can be extended to $L^p(\mathbb{R},B)$ into itself, for some
(equivalently, for any) $1<p<\infty$. In the proof of \cite[Theorem,
p. 402]{GD} a vector valued version of a classical transference
result was used.

Assume that $B$ is a Banach space and $\gamma\in \mathbb{R}$. The operator $\Big(-\frac{d^2}{dx^2}\Big)^{i\gamma}$ takes the form
$$
\Big(-\frac{d^2}{dx^2}\Big)^{i\gamma}f=(|y|^{2i\gamma}\hat{f})\check{}, \,\,\,f\in L^2(\mathbb{R}),
$$
where $\hat f$ denotes the Fourier transform of $f$ and $\check{f}$ the inverse Fourier transform of $f$. If $f\in L^1(\mathbb R)$ we define
$$
\hat f(y)=\int_{\mathbb R} e^{-ixy}f(x)\,dx,\;\; y \in \mathbb R,
$$
and
$$
 \check {f}(y)= \frac{1}{2\pi}\int_{\mathbb R} e^{ixy}f(x)\,dx,\;\; y \in \mathbb R.
$$
As it is well known the Fourier transform can be extended from
$L^1(\mathbb{R})\cap L^2(\mathbb{R})$ to $L^2(\mathbb{R})$ as a
bijective bounded operator from $L^2(\mathbb{R})$ into itself. The
operator $\Big(-\frac{d^2}{dx^2}\Big)^{i\gamma}$ is bounded from
$L^p(\mathbb{R})$ into itself, for every $1<p<\infty$. If
$1<p<\infty$ and $f\in L^p(\mathbb{R})\otimes B$, that is,
$f=\sum_{j=1}^r \beta_jf_j$, where $\beta_j\in B$, $f_j\in
L^p(\mathbb{R})$, $j=1,\ldots,r\in \mathbb{N}$, we define, as usual,
$$
\Big(-\frac{d^2}{dx^2}\Big)^{i\gamma}(f)=\sum_{j=1}^r\beta_j\Big(-\frac{d^2}{dx^2}\Big)^{i\gamma}(f_j).
$$

We also consider the operator $\Big(-\frac{d^2}{dx^2}\Big)^{i\gamma}_{|\mathbb{T}}$, where $\mathbb{T}=[0,2\pi)$ denotes the one-dimensional torus, defined by
$$
\Big(-\frac{d^2}{dx^2}\Big)^{i\gamma}_{|\mathbb{T}}(g)(x)=\sum_{j\in \mathbb{Z},\,j\neq 0}|j|^{i\gamma}c_j(g)e^{ijx},\,\, x \in (0,2\pi)\,\,\, and \,\,\,g\in L^p(\mathbb{T}),\,\,1< p<\infty,
$$
being $\displaystyle
c_j(g)=\frac{1}{2\pi}\int_0^{2\pi}g(\theta)e^{-ij\theta}d\theta$,
$j\in \mathbb{Z}$. The operator
$\Big(-\frac{d^2}{dx^2}\Big)^{i\gamma}_{|\mathbb{T}}$ is bounded
from $L^p(\mathbb{T})$ into itself, $1<p<\infty$. If $1<p<\infty$
and $g\in L^p(\mathbb{T})\otimes B$, that is, $g=\sum_{j=1}^r
\beta_jg_j$, where $\beta_j\in B$, $g_j\in L^p(\mathbb{T})$,
$j=1,\ldots,r\in \mathbb{N}$, we define
$$
\Big(-\frac{d^2}{dx^2}\Big)^{i\gamma}_{|\mathbb{T}}(g)=\sum_{j=1}^r\beta_j\Big(-\frac{d^2}{dx^2}\Big)^{i\gamma}_{|\mathbb{T}}(g_j).
$$

Guerre-Delabriere (\cite[p. 402]{GD}) showed that if
$\Big(-\frac{d^2}{dx^2}\Big)^{i\gamma}_{|\mathbb{T}}$ can be
extended to $L^2(\mathbb{T},B)$ as a bounded operator from
$L^2(\mathbb{T},B)$ into itself, then $B$ is UMD. Moreover, she used
a vector valued transference result (see \cite{CW} for the scalar
result) that implies that
$\Big(-\frac{d^2}{dx^2}\Big)^{i\gamma}_{|\mathbb{T}}$ can be
extended to $L^2(\mathbb{T},B)$ as a bounded operator from
$L^2(\mathbb{T},B)$ into itself, provided that
$\Big(-\frac{d^2}{dx^2}\Big)^{i\gamma}$ can be extended to
$L^2(\mathbb{R},B)$ as a bounded operator from $L^2(\mathbb{R},B)$
into itself. Note that, by using vector valued Calder\'on-Zygmund
theory (\cite{RbRT}) we can see that
$\Big(-\frac{d^2}{dx^2}\Big)^{i\gamma}$ can be extended to
$L^p(\mathbb{R},B)$ as a bounded operator from $L^p(\mathbb{R},B)$
into itself, for some $1<p<\infty$, if and only if
$\Big(-\frac{d^2}{dx^2}\Big)^{i\gamma}$ can be extended to
$L^2(\mathbb{R},B)$ as a bounded operator from $L^2(\mathbb{R},B)$
into itself.

In order to prove Theorem \ref{UMD} we need to show a higher
dimension version of Guerre-Delabriere's result. The operators
$(-\Delta)^{i\gamma}$ (respectively, $(-\Delta)^{i\gamma}_{|\mathbb{T}^n}$)
are defined on $L^p(\mathbb{R}^n)$ and $L^p(\mathbb{R}^n)\otimes B$
(respectively, on $L^p(\mathbb{T}^n)$ and $L^p(\mathbb{T}^n)\otimes
B$), $1<p<\infty$, in the natural way.

\begin{propo} \label{GDn} Let $B$ be a Banach space and $n\in \mathbb{N}$. Then, the following assertions are equivalent.

(i) $B$ is UMD.

(ii) For every $\gamma\in \mathbb{R}$ and for some (equivalently, for any) $1<p<\infty$, the operator $(-\Delta)^{i\gamma}$ can be extended to $L^p(\mathbb{R}^n,B)$ as a bounded operator from $L^p(\mathbb{R}^n,B)$ into itself.
\end{propo}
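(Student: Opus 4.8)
The plan is to deduce both implications from the one-dimensional theorem of Guerre-Delabriere recalled above, by transferring between $\mathbb{R}^n$ and the torus $\mathbb{T}^n$ and then cutting down to a single coordinate on $\mathbb{T}^n$. The implication $(i)\Rightarrow(ii)$ is the easier one: the Fourier symbol $\sigma_\gamma(\xi)=|\xi|^{2i\gamma}$ of $(-\Delta)^{i\gamma}$ satisfies the H\"ormander--Mihlin condition, so, $B$ being UMD, $(-\Delta)^{i\gamma}$ extends to a bounded operator on $L^p(\mathbb{R}^n,B)$ for every $1<p<\infty$ and every $\gamma\in\mathbb{R}$, by the vector-valued H\"ormander--Mihlin multiplier theorem (see \cite{Bo1}; equivalently, because $-\Delta$ has a bounded $H^\infty$ functional calculus on $L^p(\mathbb{R}^n,B)$ when $B$ is UMD). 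In particular, $(i)$ gives $(ii)$ for all exponents at once.

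For $(ii)\Rightarrow(i)$ we assume that, for some fixed $p_0\in(1,\infty)$, the operator $(-\Delta)^{i\gamma}$ extends to a bounded operator on $L^{p_0}(\mathbb{R}^n,B)$ for every $\gamma\in\mathbb{R}$. The first step is to pass to the torus. Since $\sigma_\gamma$ is discontinuous at the origin, de Leeuw's restriction theorem cannot be applied to $\sigma_\gamma$ directly; to remedy this, fix $\eta\in C_c^\infty(\mathbb{R}^n)$ with $\eta\equiv1$ near $0$ and $\mathrm{supp}\,\eta\subset B(0,1/2)$. Since convolution with $\check{\eta}\in\mathcal{S}(\mathbb{R}^n)$ is bounded on $L^{p_0}(\mathbb{R}^n,B)$, the function $(1-\eta)\sigma_\gamma$ is again a bounded Fourier multiplier on $L^{p_0}(\mathbb{R}^n,B)$ (its operator being $T_{\sigma_\gamma}-T_{\sigma_\gamma}\circ T_{\eta}$); moreover $(1-\eta)\sigma_\gamma$ is continuous on all of $\mathbb{R}^n$, vanishes at $0$, and equals $|k|^{2i\gamma}$ at each $k\in\mathbb{Z}^n\setminus\{0\}$, that is, it coincides on $\mathbb{Z}^n$ with the Fourier symbol of $(-\Delta)^{i\gamma}_{|\mathbb{T}^n}$. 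The vector-valued transference result used in \cite{GD} (a de Leeuw-type restriction theorem; see \cite{CW} for the scalar case), valid for an arbitrary Banach space, then shows that $(-\Delta)^{i\gamma}_{|\mathbb{T}^n}$ extends to a bounded operator on $L^{p_0}(\mathbb{T}^n,B)$ for every $\gamma\in\mathbb{R}$.

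The second step is the dimension reduction. Applying $(-\Delta)^{i\gamma}_{|\mathbb{T}^n}$ to functions of the form $g(x_1,\dots,x_n)=h(x_1)$, with $h\in L^{p_0}(\mathbb{T})\otimes B$, only the frequencies $(k,0,\dots,0)$ with $k\neq0$ occur and $|(k,0,\dots,0)|=|k|$, so $(-\Delta)^{i\gamma}_{|\mathbb{T}^n}g(x)=\Big[\Big(-\frac{d^2}{dx^2}\Big)^{i\gamma}_{|\mathbb{T}}h\Big](x_1)$. Since the $L^{p_0}(\mathbb{T}^n,B)$-norms of $g$ and of $(-\Delta)^{i\gamma}_{|\mathbb{T}^n}g$ equal $(2\pi)^{(n-1)/p_0}$ times the $L^{p_0}(\mathbb{T},B)$-norms of $h$ and of $\Big(-\frac{d^2}{dx^2}\Big)^{i\gamma}_{|\mathbb{T}}h$ respectively, we conclude that $\Big(-\frac{d^2}{dx^2}\Big)^{i\gamma}_{|\mathbb{T}}$ is bounded on $L^{p_0}(\mathbb{T},B)$ for every $\gamma\in\mathbb{R}$. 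The kernel of this operator is a standard Calder\'on--Zygmund kernel on $\mathbb{T}$ (the periodization of the imaginary-power kernel on $\mathbb{R}$), so, exactly as in the remark made above for the line, vector-valued Calder\'on--Zygmund theory (\cite{RbRT}) upgrades this to boundedness on $L^2(\mathbb{T},B)$ for every $\gamma\in\mathbb{R}$. By Guerre-Delabriere's theorem, $B$ is UMD. Combining the two implications also yields the ``for some (equivalently, for any) $1<p<\infty$'' assertion, since $(ii)$ for a single $p$ forces $(i)$, which in turn forces $(ii)$ for all $p$.

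There is no deep obstacle here: the argument is a transference reduction to Guerre-Delabriere's one-dimensional result, and the only points requiring care lie in $(ii)\Rightarrow(i)$ --- the discontinuity of $\sigma_\gamma$ at the origin, handled by the cut-off $1-\eta$ (using that multiplying a bounded $L^{p_0}(\mathbb{R}^n,B)$-multiplier by a Schwartz cut-off keeps it in the class), and the need to land \emph{exactly} on Guerre-Delabriere's torus operator, which is what makes it convenient to route the proof through $\mathbb{T}^n$ and restrict to the first coordinate rather than attempt a limiting ``slicing'' argument on $\mathbb{R}^n$. The final adjustment of the exponent on $\mathbb{T}$ is routine but does rely on the torus imaginary power having a genuine Calder\'on--Zygmund kernel.
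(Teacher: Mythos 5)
Your proof is correct and follows essentially the same route as the paper: both cut off the symbol $|\xi|^{2i\gamma}$ near the origin with a smooth compactly supported function, transfer the resulting continuous multiplier to $\mathbb{T}^n$, restrict to functions of the first coordinate to reach $\bigl(-\frac{d^2}{dx^2}\bigr)^{i\gamma}_{|\mathbb{T}}$, and invoke Guerre-Delabri\`ere's torus criterion. The only divergences are in choice of references (you invoke vector-valued H\"ormander--Mihlin / de Leeuw, where the paper cites Zimmermann's multiplier theorem and the Stein--Weiss transference argument together with \cite{GLY}) and in your explicitly carrying out the Calder\'on--Zygmund upgrade from $L^{p_0}(\mathbb{T},B)$ to $L^2(\mathbb{T},B)$, a step the paper leaves implicit.
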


\begin{proof}
$(i)\Rightarrow (ii)$ It is a consequence of \cite[Proposition
3]{Zim}.

$(ii)\Rightarrow (i)$ We show this part by adapting standard
transference arguments to a vector valued setting. For the sake of
completeness we include the proof.

Let $\gamma\in \mathbb{R}$ and $1<p<\infty$. Suppose that the operator $(-\Delta)^{i\gamma}$ can be extended to $L^p(\mathbb{R}^n,B)$ as a bounded operator from $L^p(\mathbb{R}^n,B)$ into itself. We choose an even smooth function on $\mathbb{R}$ such that $\phi(x)=1$, $|x|\le 1/4$, and $\phi(x)=0$, $|x|\ge 1/2$. We split the operator $(-\Delta)^{i\gamma}$ as follows
\begin{eqnarray*}
(-\Delta)^{i\gamma}(f)&=&(\phi(|x|^2)|x|^{2i\gamma}\hat{f})\check{}+ ((1-\phi(|x|^2))|x|^{2i\gamma}\hat{f})\check{}\\
&=&A_1(f)+A_2(f),\,\,\,f\in C_c^\infty(\mathbb{R}^n)\otimes B.
\end{eqnarray*}
Here, $\hat h$ denotes the Fourier transform of $h$ and $\check{h}$
the inverse Fourier transform of $h$ in $\mathbb{R}^n$, defined, for
every  $h \in L^1(\mathbb R^n)$, by
$$
\hat h(y)=\int_{\mathbb R^n} e^{-ixy}h(x)\,dx,\;\; y \in \mathbb R^n,
$$
and
$$
 \check h(y)= \frac{1}{(2\pi)^n}\int_{\mathbb R^n} e^{ixy}h(x)\,dx,\;\; y \in \mathbb R^n.
$$
Also, we consider the function $\varphi(x)=\phi(|x|^2)$, $x\in \mathbb{R}^n$, and the Fourier multiplier $T_\varphi$ defined by
$$
T_\varphi(f)=(\varphi\hat{f})\check{},\,\,\,f\in C_c^\infty(\mathbb{R}^n)\otimes B,
$$
in a natural way. Since $\hat{\varphi}\in L^1(\mathbb{R}^n)$, $T_\varphi$ can be extended to $L^p(\mathbb{R}^n,B)$ as a bounded operator from  $L^p(\mathbb{R}^n,B)$ into itself. Then, $A_1$ and therefore $A_2$ can be extended to $L^p(\mathbb{R}^n,B)$ as a bounded operator from  $L^p(\mathbb{R}^n,B)$ into itself.

We denote by $\mathcal{P}(\mathbb{T}^n,X)$ the space of
trigonometric polynomials of period $2\pi$ on $\mathbb{T}^n$ with
coefficients in a Banach space $X$. Let $P\in
\mathcal{P}(\mathbb{T}^n,B)$ and $Q\in
\mathcal{P}(\mathbb{T}^n,B')$, where $B'$ is the dual space of $B$. Since $A_2$ can be extended to
$L^p(\mathbb{R}^n,B)$ as a bounded operator from
$L^p(\mathbb{R}^n,B)$ into itself, by proceeding as in the proof of
\cite[Theorem 3.8, p. 260]{SWe} we have that
$$
\Big|\int_{\mathbb{T}^n}\langle (-\Delta)^{i\gamma}_{|\mathbb{T}^n}(P)(x),Q(x)\rangle dx\Big|\le C\|P\|_{L^p(\mathbb{T}^n,B)}\|Q\|_{L^{p'}(\mathbb{T}^n,B')},
$$
where $p'$ is the exponent conjugated to $p$.

By using \cite[Lemma 2.3]{GLY} we get
$$
\|(-\Delta)^{i\gamma}_{|\mathbb{T}^n}(P)\|_{L^p(\mathbb{T}^n,B)}\le C\|P\|_{L^p(\mathbb{T}^n,B)}.
$$
Hence, $(-\Delta)^{i\gamma}_{|\mathbb{T}^n}$ can be extended to $L^p(\mathbb{T}^n,B)$ as a bounded operator from  $L^p(\mathbb{T}^n,B)$ into itself.

In order to see that the operator $\Big(-\frac{d^2}{dx^2}\Big)^{i\gamma}_{|\mathbb{T}}$ can be extended to $L^p(\mathbb{T},B)$ as a bounded operator from  $L^p(\mathbb{T},B)$ into itself, it is sufficient to use that $(-\Delta)^{i\gamma}_{|\mathbb{T}^n}$ can be extended to $L^p(\mathbb{T}^n,B)$ as a bounded operator from  $L^p(\mathbb{T}^n,B)$ into itself, and to extend every function $f\in L^p(\mathbb{T})$ to $\mathbb{T}^n$ in the natural way, that is, defining $\tilde{f}(x_1,\ldots,x_n)=f(x_1)$, $(x_1,\ldots,x_n)\in \mathbb{T}^n$.

According to \cite[Theorem, p. 402]{GD} the above arguments allow us
to conclude that $(ii)\Rightarrow (i)$.
\end{proof}

We are going to prove Theorem \ref{UMD}. Let $\gamma \in
\mathbb{R}$. The imaginary power $\mathcal{L}^{i\gamma}$ of
$\mathcal{L}$ (respectively, $(-\Delta)^{i\gamma}$ of  $-\Delta$) is
the spectral multiplier associated with $\mathcal{L}$ (respectively,
$-\Delta$) defined by the function $m_\gamma (\lambda) =
\lambda^{i\gamma},$ $\lambda\in (0,\infty)$. Note that
$m_\gamma(\lambda)= \lambda \int_0^\infty e^{-\lambda t} \phi_\gamma(t) \ dt, \ \lambda\in (0,\infty)$, where $\phi_\gamma (t) = \frac{t^{-i\gamma}}{\Gamma(1-i\gamma)}, \ t\in(0,\infty)$.\\

Assume that $B$ is a Banach space and that $f=\sum_{j=1}^d \beta_j
f_j$, where $f_j \in C_c^\infty(\mathbb{R}^n)$ and $\beta_j\in B$,
$j=1,\ldots, d$. By Theorem \ref{Valorprincipal} and (\ref{des2.6}), we have that
\begin{equation*}
T_{m_\gamma}^{\mathcal{L}} (f) (x) =\sum_{j=1}^d \beta_j \lim_{\varepsilon \rightarrow 0^+}\left(\alpha(\varepsilon) f_j(x) +  \int_{|x-y|>\varepsilon} K_{\phi_\gamma}^{\mathcal{L}}(x,y) f_j(y) dy \right), \ \ a.e. \ x\in \mathbb{R}^n,
\end{equation*}
and
\begin {equation*}
T_{m_\gamma} (f) (x) =\sum_{j=1}^d \beta_j \lim_{\varepsilon \rightarrow 0^+}\left(\alpha(\varepsilon) f_j(x) +  \int_{|x-y|>\varepsilon} K_{\phi_\gamma}(x,y) f_j(y) dy \right), \ \ a.e. \ x\in \mathbb{R}^n.
\end{equation*}
We split the operator $T_{m_\gamma}$ as follow
$$
T_{m_\gamma}=T_{m_\gamma, g}+ T_{m_\gamma,\ell},
$$
where $T_{m_\gamma,g}(f) (x) = \int_{|x-y|\geq \rho(x)} K_{\phi_\gamma}(x,y) \ f(y) \ dy, \ x\in\mathbb{R}^n$.

The operator $S_{m_\gamma} = T_{m_\gamma}^{\mathcal{L}} -
T_{m_\gamma,\ell}$ can be extended to $L^p(\mathbb{R}^n,B)$ as a
bounded operator from $L^p(\mathbb{R}^n,B)$ into itself, for every
$1<p<\infty$. Indeed, we can write
\begin{eqnarray*}
S_{m_\gamma} (f)(x) &=& \lim_{\varepsilon\rightarrow 0^+} \int_{\varepsilon <|x-y|<\rho(x)} f(y) \left(K_{\phi_\gamma}^{\mathcal{L}}(x,y) -K_{\phi_\gamma}(x,y)\right) \ dy\\
&&-\int_{|x-y|\geq \rho(x)} f(y) \int_0^\infty \phi_\gamma(t) \frac{\partial}{\partial t} W_t^{\mathcal{L}}(x,y) \ dt \ dy\\
&=& S_{m_\gamma,1} (f)(x)+ S_{m_\gamma,2} (f)(x).
\end{eqnarray*}
By proceeding as in \cite[p. 15-17]{BFHR} we can get
$$||S_{m_\gamma,1} (f)(x)||_B \leq \int_{|x-y|<\rho(x)} ||f(y)||_B \ \left|K_{\phi_\gamma}^{\mathcal{L}}(x,y)-K_{\phi_\gamma}(x,y)\right|dy \leq C \mathcal{M}(||f||) (x),\,\,\,x\in \mathcal{R}^n,$$
and
\begin{eqnarray*}
&&||S_{m_\gamma,2} f(x)||_B \leq \int_{|x-y|\geq \rho(x)} ||f(y)||_B \int_0^{\rho(x)^2} |\phi_\gamma(t) | \left|\frac{\partial}{\partial t} W_t^{\mathcal{L}}(x,y)\right|\ dt \ dy\\
 &+&
\int_{|x-y|\geq \rho(x)} ||f(y)||_B \int_{\rho(x)^2}^\infty |\phi_\gamma(t) | \left|\frac{\partial}{\partial t} W_t^{\mathcal{L}}(x,y)\right|\ dt \ dy \leq C \mathcal{M}(||f||) (x),\,\,\,x\in \mathbb{R}^n,
\end{eqnarray*}
because $||\phi_\gamma||_\infty=1$.

Hence, by using the well known Maximal Theorem we conclude that the operator $S_{m_\gamma}$ can be extended to $L^p(\mathbb{R}^n,B)$ as a bounded operator from $L^p(\mathbb{R}^n,B)$ into itself, for every $1<p <\infty$.

Suppose now that $B$ is a $UMD$ Banach space. According to Proposition \ref{GDn} the operator $T_{m_\gamma}$ can be extended to $L^p(\mathbb{R}^n,B)$ as a bounded operator from $L^p(\mathbb{R}^n,B)$ into itself, for every $1<p<\infty$.

For every $f\in L_c^\infty(\mathbb{R}^n) \otimes B$ we have that
$$T_{m_\gamma} (f)(x) = \int_{\mathbb{R}^n} K_{\phi_\gamma}(x,y) f(y) dy, \ \ a.e. \ x\notin supp \ f.$$

Moreover, $K_{\phi_\gamma}$ is a standard Calderón-Zygmund kernel, that is, there exists $C>0$ such that\\

$$
\displaystyle {|K_{\phi_\gamma}(x,y)| \leq \frac{C}{|x-y|^n}, \ \ x\neq y,}
$$
and
$$ \displaystyle {\sum_{j=1}^n \left(\left|\frac{\partial K_{\phi_\gamma}(x,y)}{\partial x_j}\right|+\left|\frac{\partial K_{\phi_\gamma}(x,y)}{\partial y_j}\right| \right)\leq \frac{C}{|x-y|^{n+1}}, \ \ x\neq y.}
$$
Then, by proceeding as in the scalar case (see \cite[p. 34]{Stein}),
we can show that the maximal operator
$$T_{m_\gamma}^*(f)(x) = \sup_{\varepsilon>0}\left\| \int_{|x-y|>\varepsilon} K_{\phi_\gamma} (x,y) \ f(y) \ dy \right\|_B,$$
is bounded from $L^p(\mathbb{R}^n,B)$ into $L^p(\mathbb{R}^n)$, for every $1<p<\infty$.

It is clear that $||T_{m_\gamma, g}(f)(x)||_B \leq T_{m_\gamma}^* (f) (x), \ x\in\mathbb{R}^n.$ Then, for every $1<p<\infty$, $T_{m_\gamma, g}$ is bounded from $L^p(\mathbb{R}^n,B)$ into itself.

Hence, since $T_{m_\gamma, \ell}= T_{m_\gamma}-T_{m_\gamma, g}$, we conclude that, for every $1<p<\infty$, $T_{m_\gamma, \ell} $, and then also $T_{m_\gamma}^{\mathcal{L}}$, are bounded from $L^p(\mathbb{R}^n,B)$ into itself.

Assume now that for a certain $1<p<\infty$ and every $\gamma \in\mathbb{R}$ the operator $T_{m_\gamma}^{\mathcal{L}}$ can be extended to
$L^p(\mathbb{R}^n,B)$ as a bounded operator from $L^p(\mathbb{R}^n,B)$ into itself.
Then, for every  $\gamma\in \mathbb{R}$ the operator $T_{m_\gamma,\ell}$ can be extended to $L^p(\mathbb{R}^n,B)$ as a bounded operator from $L^p(\mathbb{R}^n,B)$ into itself.
According to Proposition \ref{GDn} in order to show that $B$ is a $UMD$ Banach space it is sufficient to show that for every  $\gamma\in\mathbb{R}$, $T_{m_\gamma}$ can be extended to $L^p(\mathbb{R}^n,B)$ as a bounded operator from $L^p(\mathbb{R}^n,B)$ into itself.

Let $\gamma \in\mathbb{R}$. Suppose that $f\in C_c^\infty(\mathbb{R}^n)$ and $\mbox{supp} \ f \subset B(0,M)$ for a certain $M>0$. For every $R>0$ we define $f_R(x)= f(\sqrt R x), \ x\in \mathbb{R}^n$. It is clear that, $\mbox{supp} \ f_R \subset B(0,\frac{M}{\sqrt R}), \ R>0$.

In the following our arguments  are inspired in the ones developed by Abu-Falahah, Stinga and Torrea in \cite{AST}. We are going to show that for every $\lambda >0$ there exists $R>0$ such that $\mbox{supp} \ f_R \subset B(\frac x R, \rho(\frac xR)),$ provided that $|x|<\lambda$. According to \cite[Lemma 1.1]{Sh1} there exists $C_1>0$ for which
$$\frac {1}{C_1} \rho(y) \leq \rho(x)\leq C_1 \rho(y), \ \ |x-y|\leq \rho(x).$$
Let $\lambda >0$. From \cite[Lemma 3.5]{AST} we can find $R_\lambda>0$ such that  $|y-\frac x R| <\rho(\frac xR)$, when $|y|<\frac{C_1^2 \rho(0)}{2}, |x|<\lambda$ and $R\geq R_\lambda$. We can take $R\geq \max \{ R_\lambda, \left(\frac{2M}{\rho(0)C_1^2}\right)^2\}$. Then $\mbox{supp} \ f_R \subseteq B(\frac xR, \rho(\frac x R)), \ |x|<\lambda.$

We can write, for every $R>0$,
\begin{eqnarray*}
\lefteqn{ T_{m_\gamma}(f_R) \left(\frac{x}{\sqrt R}\right)} \\ &=& \lim_{\varepsilon \rightarrow 0^+}\left(\alpha(\varepsilon) f_R\left (\frac{x}{\sqrt R}\right) +  \int_{|\frac{x}{\sqrt R}-y|>\varepsilon} f_R(y) \int_0^\infty \phi_\gamma (t) \left(-\frac{\partial}{\partial t}\right)W_t\left(\frac{x}{\sqrt R},y\right) \ dt \ dy\right)\\
&=& \lim_{\varepsilon \rightarrow 0^+}\left(\alpha(\varepsilon) f(x) +  \int_{|x-u|>\varepsilon\sqrt R} f(u) \int_0^\infty \phi_\gamma (t) \left(-\frac{\partial}{\partial t}\right)W_t\left(\frac{x}{\sqrt R},\frac{u}{\sqrt R}\right) \ dt \  \frac {du}{R^{n/2}}\right)\\
&=& \lim_{\varepsilon \rightarrow 0^+}\left(\alpha(\varepsilon) f(x) +  R \int_{|x-u|>\varepsilon\sqrt R} f(u) \int_0^\infty \phi_\gamma (t) \left.\left(-\frac{\partial}{\partial s}W_s(x,u)\right)\right|_{s=Rt} \ dt \ du\right) \\
&=& \lim_{\varepsilon \rightarrow 0^+}\left(\alpha(\varepsilon) f(x) +  \int_{|x-u|>\varepsilon\sqrt R} f(u) \int_0^\infty \phi_\gamma \left(\frac{s}{R}\right) \left(-\frac{\partial}{\partial s}\right)W_s(x,u) \ ds \  du\right), \ \ a.e \ x\in \mathbb{R}^n.\\
\end{eqnarray*}
Here $\displaystyle\alpha(\varepsilon)= \frac{1}{\Gamma(\frac{n}{2})}\int_0^\infty e^{-u}u^{\frac{n}{2}-1}\phi_\gamma\left(\frac{\varepsilon^2}{4u}\right)du, \;\;\varepsilon\in (0,1).$

Since $\phi_\gamma(as)=a^{-i\gamma}\phi_\gamma(s)$, $a,s>0$, it follows that, for
every $R>0$,
\begin{eqnarray*}
T_{m_\gamma}(f_R)\left(\frac{x}{\sqrt R}\right) &=& R^{i\gamma}\lim_{\varepsilon \rightarrow 0^+}\left(\alpha(\varepsilon\sqrt R)f(x) + \int_{|x-u|>\varepsilon \sqrt R}f(u)\int_0^\infty \phi_\gamma(s) \left(-\frac{\partial}{\partial s}\right)W_s(x,u)ds du\right)\\
&=& R^{i\gamma}T_{m_\gamma}(f)(x),\;\;\mbox{a.e.}\;x\in \mathbb R^n.
\end{eqnarray*}

As it was proved above, for every $N\in \mathbb{N}$, there exists $R_N>0$ such that $\mbox{supp} \ f_{R_N} \subseteq B \left(\frac{x}{R_N},\rho(\frac{x}{R_N})\right), \ |x| \leq N$, and $R_N \leq R_{N+1}$.
Then, it follows that
\begin{eqnarray*}
T_{m_\gamma} (f) (x) &=& R_N^{-i\gamma}  T_{m_\gamma} (f_{R_N} \chi_{B(\frac{x}{R_N},\rho(\frac{x}{R_N}))}) (\frac{x}{\sqrt{R_N}} )\\
&=& R_N^{-i\gamma}  T_{m_\gamma,\ell} (f_{R_N}) (\frac{x}{\sqrt{R_N}}), \ |x|\leq N, \ N \in \mathbb{N}.
\end{eqnarray*}
We deduce that,
\begin{eqnarray*}
\int_{B(0,N)} |T_{m_\gamma}(f)(x)|^p dx &\leq &  R_N^{n/2}\int_{\mathbb{R}^n} |T_{m_\gamma,\ell}(f_{R_N})(x)|^p dx \\
&\leq& C R_N^{n/2}\int_{\mathbb{R}^n}|f_{R_N}(x)|^p dx \\
&\leq & C ||f||_p^p, \ \ N \in \mathbb{N}.
\end{eqnarray*}
Note that $C$ does not depend on $N$.

We conclude that
\begin{equation}\label{acomul}
||T_{m_\gamma} (f)||_p \leq C ||f||_p.
\end{equation}
Also, (\ref{acomul}) holds for every $f \in C_c^\infty(\mathbb
R^n)\otimes B$. Hence, $T_{m_\gamma}$ can be extended to
$L^p(\mathbb{R}^n,B)$ as a bounded operator from
$L^p(\mathbb{R}^n,B)$ into itself. The proof is finished.

\subsection{Proof of Theorem \ref{Lp}}
\begin{proof}
This proof follows the same way that the one of the $L^p$-boundedness of the imaginary power $\mathcal{L}^{i\gamma}$ of $\mathcal{L}$,  $\gamma \in \mathbb R^n$, when $B$ is a UMD space.

Suppose that $m(\lambda)=\lambda \int_0^\infty e^{-\lambda t}\phi(t)dt$, $\lambda\in (0,\infty)$, where $\phi \in L^\infty(0,\infty)$. Let $f\in C_c^\infty(\mathbb R^n)$. According to Theorem \ref{Valorprincipal}
$$
T_m^{\mathcal{L}}(f)(x)=\lim_{\varepsilon \rightarrow 0^+}\left(\alpha(\varepsilon)f(x)+\int_{|x-y|>\varepsilon}f(y)K_\phi^{\mathcal{L}}(x,y)dy\right),\;\;\mbox{a.e.} \, x \in \mathbb R^n.
$$
Also, by (\ref{des2.6}),
$$
T_m(f)(x)=\lim_{\varepsilon \rightarrow 0^+}\left(\alpha(\varepsilon)f(x)+\int_{|x-y|>\varepsilon}f(y)K_\phi(x,y)dy\right),\;\;\mbox{a.e.} \, x \in \mathbb R^n.
$$
Here $\alpha \in L^\infty(0,\infty)$.

The operator $T_m$ is bounded from $L^p(\mathbb R^n)$ into itself, for every $1<p<\infty$, and from $L^1(\mathbb R^n)$ into $L^{1,\infty}(\mathbb R^n)$. Moreover $T_m$ is a Calder\'on-Zygmund operator. Hence, the maximal operator $T^*_m$ defined by
$$
T_m^*(f)(x)=\sup_{\varepsilon >0}\left|\int_{|x-y|>\varepsilon}f(y)K_\phi(x,y)dy\right|
$$
is bounded from $L^p(\mathbb R^n)$ into itself, for every $1<p<\infty$, and from $L^1(\mathbb R^n)$ into $L^{1,\infty}(\mathbb R^n)$. Also, the same $L^p$-boundedness properties are satisfied by the operators
$$
T_{m,\ell}(f)(x)=\lim_{\varepsilon \rightarrow 0}\left(\alpha(\varepsilon)f(x)+\int_{\varepsilon < |x-y| < \rho(x)}f(y)K_\phi(x,y)dy\right)
$$
and
$$
T_{m,g}(f)(x)=\int_{|x-y| \geq \rho(x)}f(y)K_\phi(x,y)dy.
$$
The difference $T_m^{\mathcal{L}}(f)-T_{m,\ell}(f)$ can be written as
$$
T_m^{\mathcal{L}}(f)(x)-T_{m,\ell}(f)(x)=\int_{|x-y|<\rho(x)} (K_\phi^{\mathcal{L}}(x,y)-K_\phi(x,y))f(y)dy + \int_{|x-y|\geq\rho(x)} K_\phi^{\mathcal{L}}(x,y)f(y)dy.
$$
By proceeding as in the proof of Theorem \ref{UMD} we can see that the operator $T_m^{\mathcal{L}}- T_{m,\ell}$ is bounded from $L^p(\mathbb R^n)$ into itself, for every $1<p<\infty$, and from $L^1(\mathbb R^n)$ into $L^{1,\infty}(\mathbb R^n)$.

Hence we conclude that $T_m^{\mathcal{L}}$ can be extended to $L^p(\mathbb R^n)$, $1<p<\infty$, as a bounded operator from $L^p(\mathbb R^n)$ into itself, for every $1<p<\infty$, and from $L^1(\mathbb R^n)$ into $L^{1,\infty}(\mathbb R^n)$.

Moreover, we can deduce that the maximal operator
$$
T_m^{\mathcal{L},*}(f)(x)=\sup_{\varepsilon >0}\left|\int_{|x-y|>\varepsilon}K_\phi^{\mathcal{L}}(x,y)f(y)dy\right|,\,\, x \in \mathbb R^n,
$$
is bounded from $L^p(\mathbb R^n)$ into itself, for every $1<p<\infty$, and from $L^1(\mathbb R^n)$ into $L^{1,\infty}(\mathbb R^n)$.

Hence, for every $f \in L^p(\mathbb R^n)$, $1 \leq p < \infty$, there exists the limit
$$
\lim_{\varepsilon \rightarrow 0^+}\left(f(x)\alpha(\varepsilon) + \int_{|x-y|>\varepsilon}f(y)K_\phi^{\mathcal{L}}(x,y)dy\right),\;\; \mbox{a.e.}\  x \in \mathbb R^n,
$$
and, for every $f \in L^2(\mathbb R^n)$,
$$
T_m^{\mathcal{L}}(f)(x)= \lim_{\varepsilon \rightarrow 0^+}\left(f(x)\alpha(\varepsilon) + \int_{|x-y|>\varepsilon}f(y)K_\phi^{\mathcal{L}}(x,y)dy\right),\;\; \mbox{a.e.} \ x \in \mathbb R^n.
$$
We conclude that the operator $T_m^{\mathcal{L}}$ can be extended from $L^2(\mathbb R^n) \cap L^p(\mathbb R^n)$ to $L^p(\mathbb R^n)$, $1 \leq p < \infty$, as a bounded operator from $L^p(\mathbb R^n)$ into itself, for every $1<p<\infty$ and from $L^1(\mathbb R^n)$ into $L^{1,\infty}(\mathbb R^n)$.
\end{proof}

\section{Appendix}
In this section we present a pointwise representation of the multiplier $T_m$. We establish the properties (\ref{des2.6}) and (\ref{des2.7}).
\begin{proof}
For every $f\in L^2(\mathbb R^n)$ we have that
$$
T_m(f)=(m(|y|^2)\hat f)\check{}.
$$
Let $f \in C_c^\infty(\mathbb R^n)$. We can write
\begin{eqnarray*}
T_m(f)(x) &=& \frac{1}{(2\pi)^n}\int_{\mathbb R^n}e^{ixy}m(|y|^2)\hat f(y)dy\\
&=& \frac{1}{(2\pi)^n}\int_{\mathbb R^n}e^{ixy}\hat f(y)|y|^2\int_0^\infty\phi(t)e^{-t|y|^2}dtdy\\
&=&\frac{1}{(2\pi)^n}\int_0^\infty\phi(t)\int_{\mathbb R^n}e^{ixy}|y|^2e^{-t|y|^2}\hat f(y)dy dt,\;\;x\in\mathbb R^n.
\end{eqnarray*}
The interchange in the order of integration is justified because
$$
\int_{\mathbb R^n} |\hat f(y)||y|^2\int_0^\infty e^{-t|y|^2} |\phi(t)|dtdy \leq \|\phi\|_\infty \int_{\mathbb R^n}|\hat f(y)|dy < \infty.
$$
Then,
\begin{eqnarray*}
T_m(f)(x) &=& \frac{1}{(2\pi)^n}\int_0^\infty\phi(t)\int_{\mathbb R^n} e^{ixy}e^{-t|y|^2}\widehat{ (-\Delta)f(y)}dydt\\
&=&\frac{-1}{(2\pi)^n}\int_0^\infty\phi(t)\int_{\mathbb R^n}\Delta f(z)\int_{\mathbb R^n}e^{-iy(z-x)}e^{-t|y|^2}dydzdt\\
&=& -\int_0^\infty\phi(t)\int_{\mathbb R^n}\Delta f(z)W_t(x,z) dzdt,\;\; x \in \mathbb R^n.
\end{eqnarray*}
We have taken into account that
$$
\int_{\mathbb R^n}\int_{\mathbb R^n}|\Delta f(z)|e^{-t|y|^2}dzdy < \infty,\;\; t>0,
$$
and that
$$
\int_{\mathbb R^n}e^{-iyz}e^{-t|y|^2}dy =\left(\frac{\pi}{t}\right)^{\frac{n}{2}}e^{-\frac{|z|^2}{4t}}.
$$
Since $\displaystyle\int_{\mathbb R^n}\Delta f(z)dz=\widehat{\Delta f}(0)=-|y|^2\hat{f}(y)_{|y=0}=0$, we can write
$$
T_m(f)(x)= -\int_0^\infty\phi(t)\int_{\mathbb R^n}\Delta f(z)\left (W_t(x,z)-\frac{\chi_{(1,\infty)}(t)}{(4\pi t)^{\frac{n}{2}}}\right)dzdt,\;\; x \in \mathbb R^n.
$$
It is not hard to see that
$$
\left|W_t(x,z)-\frac{1}{(4\pi t)^{\frac{n}{2}}}\right| \leq C\frac{|x-z|^2}{t^{\frac{n+2}{2}}},\;\;x,z\in \mathbb R^n\;\;\hbox{and}\;\;t>0.
$$
Hence it follows that
\begin{multline*}
\shoveleft{\int_0^\infty\int_{\mathbb R^n}|\Delta f(z)|\left|W_t(x,z)-\frac{\chi_{(1,\infty)}(t)}{(4\pi t)^{\frac{n}{2}}}\right|dzdt}\\
\shoveleft{\hspace{35mm}\leq C\left(\int_0^1\int_{\mathbb R^n}W_t(x,z)dzdt + \int_1^\infty\int_{\hbox{supp} f}\frac{|x-z|^2}{t^{\frac{n+2}{2}}}dz dt\right)}\\
\leq C(1+|x|^2),\;\;x \in \mathbb R^n.\hspace{68mm}
\end{multline*}
Then,
$$
T_m(f)(x)=-\lim_{\varepsilon \rightarrow 0^+}\int_0^\infty\phi(t)\int_{|x-z|>\varepsilon}\Delta f(z) \left(W_t(x,z) - \frac{\chi_{(1,\infty)}(t)}{(4\pi t)^{\frac{n}{2}}}\right)dzdt,\;\;x\in \mathbb R^n.
$$
Let $0<\varepsilon <1$. The Green formula leads to,
\begin{eqnarray*}
\lefteqn{\int_{|x-z|>\varepsilon}\Delta f(z)\left(W_t(x,z)-\frac{\chi_{(1,\infty)}(t)}{(4\pi t)^{\frac{n}{2}}}\right)dz}\\
&=& \int_{|x-z|>\varepsilon}f(z)\Delta_z W_t(x,z)dz + \int_{|x-z|=\varepsilon}\partial_n f(z)\left(W_t(x,z)-\frac{\chi_{(1,\infty)}(t)}{(4\pi t)^{\frac{n}{2}}}\right)d\sigma(z) \\
&&- \int_{|x-z|=\varepsilon}f(z) \partial_{n,z}W_t(x,z)d\sigma(z),\;\;x \in \mathbb R^n\;\; \mbox{and}\;\; t>0.
\end{eqnarray*}
Here $\partial_n$ represents the derivative in the direction normal exterior to the sphere $S_\varepsilon=\{z\in \mathbb R^n:|z-x|=\varepsilon\}$.

By using \cite[Lemma 2.1]{StTo} we have that
\begin{eqnarray*}
\lefteqn{\left|\int_0^\infty \phi(t)\int_{|x-z|=\varepsilon} \partial_nf(z)\left(W_t(x,z)-\frac{\chi_{(1,\infty)}(t)}{(4\pi t)^{\frac{n}{2}}}\right)d\sigma(z)dt\right|}\\
&\leq&C\int_{|x-z|=\varepsilon}\left(\int_0^1\frac{e^{-\frac{|x-z|^2}{4t}}}{t^{\frac{n}{2}}}dt+\int_1^\infty\frac{|x-z|^2}{t^{\frac{n}{2}+1}}dt\right)d\sigma(z)\\
&\leq&C\int_{|x-z|=\varepsilon}\left(\frac{1}{|x-z|^{n-2}}+|x-z|^2\right)d\sigma(z) \leq C\varepsilon,\;\;x\in \mathbb R^n.
\end{eqnarray*}
If $n(z)$ denotes a unitary vector in the direction exterior normal  in $z \in S_\varepsilon$, we obtain
\begin{eqnarray*}
\partial_{n,z}W_t(x,z) &=& \langle \nabla_zW_t(x,z),n(z)\rangle = W_t(x,z)\langle\frac{x-z}{2t},n(z)\rangle \\
&=& W_t(x,z)\frac{|x-z|}{2t} = \frac{e^{-\frac{\varepsilon^2}{4t}}\varepsilon}{2(4\pi)^{\frac{n}{2}}t^{\frac{n}{2}+1}},\;\; z \in S_\varepsilon.
\end{eqnarray*}

Moreover, $\displaystyle\sigma(S_\varepsilon)= 2\varepsilon^{n-1}\frac{\pi^{\frac{n}{2}}}{\Gamma(\frac{n}{2})}$. Then we have that
\begin{eqnarray*}
\lefteqn{\int_0^\infty\int_{|x-z|=\varepsilon}f(z)\partial_{n,z}W_t(x,z)d\sigma(z)\phi(t)dt} \\
&=& \varepsilon\int_{|x-z|=\varepsilon}f(z)\int_0^\infty \frac{e^{-\frac{\varepsilon^2}{4t}}}{2(4\pi)^{\frac{n}{2}}}\frac{\phi(t)}{t^{\frac{n}{2}+1}}dtd\sigma(z)\\
&=& \frac{1}{\varepsilon^{n-1}2\pi^{\frac{n}{2}}}\int_{|x-z|=\varepsilon}f(z)\int_0^\infty\phi(\frac{\varepsilon^2}{4u})e^{-u}u^{\frac{n}{2}-1}dud\sigma(z)\\
&=&\frac{1}{\varepsilon^{n-1}2\pi^{\frac{n}{2}}}\int_{|x-z|=\varepsilon}(f(z)-f(x))\int_0^\infty\phi(\frac{\varepsilon^2}{4u})e^{-u}u^{\frac{n}{2}-1}dud\sigma(z)\\
&&+f(x)\alpha(\varepsilon), \;\;x\in \mathbb R^n,
\end{eqnarray*}
where
$\displaystyle\alpha(\varepsilon)=\frac{1}{\Gamma(\frac{n}{2})}\int_0^\infty\phi(\frac{\varepsilon^2}{4u})e^{-u}u^{\frac{n}{2}-1}du$,
$0<\varepsilon<1$.

Since $f$ is a continuous function we get
$$
\lim_{\varepsilon \rightarrow 0^+}\frac{1}{\varepsilon^{n-1}2\pi^{\frac{n}{2}}}\int_{|x-z|=\varepsilon}(f(z)-f(x))\int_0^\infty\phi(\frac{\varepsilon^2}{4u})e^{-u}u^{\frac{n}{2}-1}dud\sigma(z)=0.
$$
It is clear that $\alpha$ is a bounded function on $(0,\infty)$. Moreover, if there exists $\phi(0^+)=\lim_{t \rightarrow 0^+}\phi(t)$, by using the dominated convergence theorem we obtain
$$
\lim_{\varepsilon \rightarrow 0^+}\alpha(\varepsilon)=\phi(0^+).
$$
Since $\Delta_zW_t(x,z)=\frac{\partial}{\partial t}W_t(x,z)$, $x,z \in \mathbb R^n$ and $t>0$, the above arguments allow us to establish (\ref{des2.6}) and (\ref{des2.7}).

\end{proof}

\def\cprime{$'$}

\end{document}